\newcounter{notes}%
\definecolor{darkgreen}{rgb}{0.0, 0.5, 0.0}
\newtheorem{theorem}{Theorem}[section]
\newtheorem{lemma}[theorem]{Lemma}
\newtheorem{corollary}[theorem]{Corollary} 
\newtheorem{definition}[theorem]{Definition} 
\newtheorem{proposition}[theorem]{Proposition}
\newtheorem{open}{Open Theorem}
\newtheorem{closed}{Closed Theorem}
\newtheorem{clopen}{Clopen Theorem}
\def\Rep{\operatorname{Rep}}
\def\gap{\vspace{.3cm}\noindent}
\def\smallskip{\vspace{.15cm}}
\def\medskip{\vspace{.3cm}}
\def\text{\mbox}
\def\rh2{{\mathbb R}{\mathbb H}^2}
\def\ch2{{\mathbb C}{\mathbb H}^2}
\def\RR{{\mathbb R}}
\def\O{\operatorname{O}}
\def\ZZ{{\mathbb Z}}
\def\HH{{\mathbb H}}
\def\PP{\operatorname{{\mathbb P}}}
\def\R{{\mathbb R}}
\def\SS{\mathbb R\operatorname{P}_+}
\def\Fr{\operatorname{Fr}}
\def\interior{\operatorname{int}}
\def\SL{\operatorname{SL}}
\def\SO{\operatorname{SO}}
\def\PGL{\operatorname{PGL}}
\def\GL{\operatorname{GL}}
\def\Aff{\operatorname{Aff}}
\def\Hom{\operatorname{Hom}}
\def\H2R{{\mathbb H}^2\times {\mathbb R}}
\def\cl{\operatorname{cl}}
\def\dev{\operatorname{dev}}
\def\Aff{\operatorname{Aff}}
\def\C2{\operatorname{C^2}}
\def\halfgap{\vspace{.05in}}
\def\Vcal{\mathcal V}
\def\Ccal{\mathcal C}
\def\Bcal{\mathcal B}
\def\Dcal{\mathcal P}
\def\Dcal{\mathcal D}
\def\Tcal{\mathcal T}
\def\boxset{\mathfrak B}
\def\CH{\operatorname{CH}}
\def\PO{\operatorname{PO}}
\def\bdy{\partial}
\def\RP{\operatorname{\mathbb RP}}
\def\vol{\operatorname{vol}}
\def\cm{\hat{\mu}}
\def\Diag{\operatorname{Diag}}
\def\Wcal{\mathcal W}
\def\length{\operatorname{length}}
\def\RPn{{\mathbb{RP}}^n}
\def\Aut{\operatorname{Aut}}
\definecolor{back}{RGB}{255,255,255}
\definecolor{fore}{RGB}{0,0,0}
\definecolor{title}{RGB}{255,0,90}
\definecolor{green}{rgb}{0.0, 0.5, 0.0}
\definecolor{purple}{rgb}{0.5, 0.0, 0.5}
\definecolor{bluegreen}{rgb}{0.0,0.5, 0.5}
\definecolor{orange}{rgb}{1,0.5, 0.1}
\definecolor{redgreen}{rgb}{0.5, 0.5, 0.0}
\def\green{\color{green}}
\def\dvol{\operatorname{dvol}}
\def\Id{\operatorname{I}}
\def\FG{\operatorname{RA}}
\def\green{\color{green}}
\def\g2{{\green 2}}
\newcommand{\bv}{\left[\begin{array}{c}}
\newcommand{\ev}{\end{array}\right]}
\newcommand{\bbmat}{\begin{bmatrix}} 
\newcommand{\ebmat}{\end{bmatrix}}
\newcommand{\bmat}{\begin{matrix}} 
\newcommand{\emat}{\end{matrix}}
\newcommand{\bpmat}{\begin{pmatrix}} 
\newcommand{\epmat}{\end{pmatrix}}
\begin{document}
\title{The space of strictly convex real-projective structures on a closed manifold}
\date{\today}

\author{Daryl Cooper }
\email{cooper@math.ucsb.edu}

\address{Department of Mathematics, University of California, Santa Barbara, CA 93106, UlSA}

\author{Stephan Tillmann}
\email{stephan.tillmann@sydney.edu.au}
\address{School of Mathematics and Statistics, The University of Sydney, NSW 2006, Australia}

\keywords{projective structure, deformation, properly convex, clopen}

\begin{abstract} We give a new proof of the fact that, if $M$ is a compact $n$--manifold with no boundary, then the set
of holonomies of strictly convex real-projective structures on $M$ is a subset
of  $\Hom(\pi_1M,\PGL(n+1,\mathbb{R}))$ that is both open and closed.
\end{abstract}
\maketitle

 A {\em properly convex} domain is a convex open set  in real projective space whose closure contains no projective line. 
The quotient of such a domain by a discrete torsion-free
group of projective transformations that preserve the domain is a properly convex manifold.
 If, in addition, the  frontier of the domain contains no segment of a projective line
 bigger than one point, then the manifold is {\em strictly convex}. A hyperbolic manifold is a strictly convex projective manifold. 
 
 The case of dimension $2$ had been much studied. This is the simplest case of higher Teichm\"uller theory. A basic result is that 
 a strictly convex structure  on a closed manifold is uniquely determined by the conjugacy class of  the holonomy homomophism into the projective general linear group.
  Moreover, if the dimension of the manifold is at least $2$, then the holonomies that arise are precisely those in certain topological components of the representation variety. The goal of this paper
  is to provide self contained proofs of these results. 
  
  If $G$ is a Lie group, and $X$ is a manifold on which $G$ acts analytically and transitively, then a $(G,X)$--structure on a manifold is a maximal collection
  of charts which take values in $X$ and such that the transition functions are in $G$. For such structures the {\em Ehresmann-Thurston principle}
  says that  when $M$ is {\em closed} (compact without boundary),  then the set of holonomies of $(G,X)$--structures is an open subset of the representation variety.
  However properly convex structures do not fit into this scheme. The basic issue is that the convex open set in projective space varies. The set of such holonomies is
  often not closed. For example, the space of hyperbolic structures on the circle is not closed in $\operatorname{PO}(1,1)$.  
  In general, the space of all projective structures 
  on a manifold of dimension larger than $2$ is mysterious.

If $M$ is a compact $n$--manifold, then $\Rep(M)=\Hom(\pi_1M,\PGL(n+1,\RR))$ is a real algebraic variety. Let
$\Rep_{P}(M)$ and $\Rep_{S}(M)$ be, respectively, the subsets of $\Rep(M)$ of holonomies of properly convex, and of strictly convex structures  on $M$.
Then $\Rep_S(M)\subseteq\Rep_P(M)$. Throughout this paper we use the Euclidean topology everywhere, and not the {\em Zariski topology}.
In the following, $M$ is closed and $n\ge 2$.

\begin{open}\label{open} 
$\Rep_{P}(M)$ is open in $\Rep(M)$.
  \end{open}
  
\setcounter{closed}{1}  
\begin{closed}\label{closed} 
$\Rep_{S}(M)$ is closed in $\Rep(M)$.  
 \end{closed}

\setcounter{clopen}{2}  
\begin{clopen}
$\Rep_S(M)$ is a union of connected components
of $\Rep(M)$.
\end{clopen}

It follows from (\ref{unique}) that the holonomy of a strictly convex structure uniquely determines a projective manifold up to projective isomorphism.
The Open Theorem is due to Koszul~\cite{Kos1, Kos2}. Our proof is distilled from his. There is an affine manifold that is the tautological line bundle over a projective manifold, $M$.  Then $M$ is properly convex
if and only if there is an outwards-convex section of this bundle such that every connected flat subset of the section is contained in a simplex.
   This type of convexity  is easily shown to be preserved by small deformations. 

The Closed Theorem when $n=2$ is due to Choi and Goldman~\cite{CW2, MR2170138}, to Kim~\cite{Kim} when $n=3$, and to Benoist~\cite{Ben5} in general.
Our proof is new, and based on a geometric argument called {\em the box estimate} (\ref{boxestimate}). Given a sequence
of strictly convex projective structures whose holonomies converge, the associated domains might degenerate.  Benzecri's compactness theorem \cite{Benz}
implies one may apply projective transformations so that the domains converge to a properly convex domain, but then the holonomies might
diverge. The box estimate implies that if one chooses the conjugacies carefully, then the holonomies also converge.
This involves an elementary geometric fact (\ref{centerexists}) about an analogue of centroids for subsets of the sphere.

The Clopen Theorem follows from  the Open Theorem and the fact,  due to Benoist,  
 that a properly convex manifold that is homeomorphic to a strictly
convex manifold is also strictly convex (\ref{stricthomeo}). 
We have made an effort to make the paper  self-contained by including  proofs
of all the foundational results needed in \Cref{background,sec:cones}. 
We have endeavoured to simplify these proofs as much as possible.
See \cite{MR2464391} for an excellent survey.

There are extensions of the Open and Closed Theorems
when $M$ is the interior of a compact manifold with boundary, see \cite{CLT2} and forthcoming work. 
Indeed, the techniques in this paper were developed to handle this more general situation for which the pre-existing
methods do not suffice.
But it seems useful to present some of the main ideas
in the simplest setting. 

 The first author thanks the Sydney Mathematical Research Institute (SMRI) for partial support and hospitality while writing this paper. He also thanks the audience at SMRI that participated in a  presentation
of this material.
  Research of the second author is supported in part under the Australian Research Council's ARC Future Fellowship FT170100316. We thank the referee for a splendid job and a shorter proof of (\ref{nilpotent}).

\section{Properly and strictly convex}\label{background}

This section and the next review some well known results concerning properly and strictly convex projective manifolds that are needed to prove the theorems. 
The results needed subsequently are   (\ref{nilpotent})--(\ref{stricthomeo}) at the end of this section.
The only mild innovation
is that, to avoid appealing to results about word-hyperbolic groups, a more direct approach was taken to the proof of (\ref{stricthomeo}).
 The early sections of \cite{CLT1}  greatly expand on the background in this section.

 If $M$ is a manifold, the universal cover is $\pi_{M}:\widetilde M \to M$, and if $g\in\pi_1M$,  then $\tau_g:\widetilde M\to\widetilde M$
 is the covering transformation  corresponding to $g$. A {\em geometry} is a pair $(G,X)$, where $G$ is a group that acts analytically and transitively 
 on a manifold $X$.
 A $(G,X)$--structure on a manifold $M$ is determined by a {\em development pair} $(\dev,\rho)$ that consists of the {\em holonomy} $\rho\in \Hom(\pi_1M,G)$ and 
 the {\em developing map} $\dev:\widetilde M\to X$ which  is a local homeomorphism. The pair
 satisifies for all $x\in\widetilde M$ 
and $g\in\pi_1M$ that $\dev(\tau_{g}x)=(\rho g)\dev x$.

 
In what follows $V=\RR^{n+1}$, its dual vector space is $V^*=\Hom(V,\RR)$, and $\RR^{n+1}_0=V_0=V\setminus \{0\}$.
{\em Projective space} is $\PP V=V_0/\RR_0$, and $[A]\in\Aut(\PP V)=\PGL(V)$ acts on $\PP V$ by $[A][x]=[Ax]$.
{\em Projective geometry}  
 is $(\Aut(\PP V),\PP V)$ and is also written $(\Aut(\RP^n),\RP^n)$. {\em Radiant Affine geometry} is $(\GL(V),V_0)$.

We use the notation $\RR^+=(0,\infty)$. {\em Positive projective space} is $\RP^n_+=\PP_+V=V_0/\RR^+$ and 
$[x]_+=\{\lambda x:\lambda>0\}$ for $x\in V_0$. Sometimes we identify $\PP_+V$ with the unit sphere $S^n\subset\RR^{n+1}$
via $[x]_+\equiv x/\|x\|$.
The group $\Aut(\PP_+V):=\SL_{\pm}(V)\subset\GL(V)$ is the subgroup  with $\det=\pm1$. 
{\em Positive projective geometry} $(\Aut(\PP_+V),\PP_+V)$ is the double cover of projective geometry. 
We will pass back and forth between projective geometry and positive projective geometry without mention, often omitting the term {\em positive}. 

 If $U$ is a vector subspace of $V$, then $\PP U$ is a {\em projective subspace} of $\PP V$. This
is a {\em (projective) line} if $\dim U=2$ and a {\em (projective) hyperplane} if $\dim U=\dim V-1$.
The {\em (projective) dual} of $U$ is the projective subspace $\PP U^0\subset\PP V^*$ where  $U^0=\{\phi\in V^* :\phi(U)=0\}$.
We use the same terminology in positive projective geometry.
By lifting developing maps one obtains:

\begin{proposition}\label{holonomylifts} Every projective structure on $M$ lifts to a
positive projective structure. 
\end{proposition}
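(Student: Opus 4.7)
The plan is to use the fact that the projection $p\colon \RP^n_+ \to \RP^n$ is a two-fold covering, together with the fact that $\widetilde M$ is simply connected, to lift both the developing map and then the holonomy.

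First I would construct the lift of the developing map. Since $\dev\colon \widetilde M \to \RP^n$ is a continuous map from a simply connected space into the base of the covering $p\colon \RP^n_+ \to \RP^n$, the covering-space lifting criterion produces a continuous map $\widetilde{\dev}\colon \widetilde M \to \RP^n_+$ with $p\circ \widetilde{\dev} = \dev$, determined up to the deck transformation of $p$ (i.e.\ the antipodal involution) once a basepoint lift is chosen. Because $p$ is a local homeomorphism and $\dev$ is a local homeomorphism, so is $\widetilde{\dev}$.

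Next I would lift the holonomy. For each $g\in\pi_1 M$, the element $\rho(g)\in\PGL(n+1,\RR)$ has exactly two lifts to $\SL^{\pm}(n+1,\RR)=\Aut(\PP_+V)$, differing by $-\Id$. Apply each to $\widetilde{\dev}$: both of the composites $\pm\tilde A\circ \widetilde{\dev}$ project under $p$ to $\rho(g)\circ \dev=\dev\circ\tau_g$, so each is a lift of $\dev\circ\tau_g$. The composite $\widetilde{\dev}\circ\tau_g$ is another such lift. Since $\widetilde M$ is connected and the covering $p$ has only the antipodal deck transformation, exactly one of the two candidates $\pm \tilde A$ agrees with $\widetilde{\dev}\circ\tau_g$ everywhere. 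Define $\tilde\rho(g)$ to be that choice. This gives the required equivariance $\widetilde{\dev}\circ\tau_g=\tilde\rho(g)\circ\widetilde{\dev}$.

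It remains to check that $\tilde\rho$ is a homomorphism. Given $g,h\in\pi_1 M$, applying equivariance twice yields
\[
\widetilde{\dev}\circ\tau_{gh}=\widetilde{\dev}\circ\tau_g\circ\tau_h=\tilde\rho(g)\circ\widetilde{\dev}\circ\tau_h=\tilde\rho(g)\tilde\rho(h)\circ\widetilde{\dev}.
\]
By the uniqueness of the lift of $\rho(gh)$ satisfying equivariance with $\widetilde{\dev}$, we conclude $\tilde\rho(gh)=\tilde\rho(g)\tilde\rho(h)$. Hence $(\widetilde{\dev},\tilde\rho)$ is a positive projective development pair covering $(\dev,\rho)$.

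The argument is essentially a formal covering-space exercise; the only place that requires a moment's care is verifying that the two potential lifts $\pm\tilde A\circ\widetilde{\dev}$ really separate globally (rather than agreeing on part of $\widetilde M$ and disagreeing on the rest), but this is immediate from connectedness of $\widetilde M$ and the fact that $p$ is a free $\ZZ/2$ quotient. I do not see a substantive obstacle.
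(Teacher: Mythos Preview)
Your argument is correct and is precisely the standard ``lift the developing map, then lift the holonomy'' procedure the paper invokes; the paper gives no details beyond the phrase ``By lifting developing maps one obtains,'' and your proposal supplies exactly those details.
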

 
The {\em frontier} of a subset $X\subset Y$ is $\Fr X=\cl(X)\setminus\interior(X)$, and the {\em boundary} is $\bdy X=X\cap\Fr X$. 
A {\em segment} is a  connected, proper subset of a projective  line that contains more than one point.
In what follows $\Omega\subset\RP^n$. If $H$ is a hyperplane and $x\in H\cap\Fr\Omega$ and $H\cap\interior\Omega=\emptyset$, then $H$ is  called a {\em supporting
hyperplane (to $\Omega$) at $x$}. 
The set $\Omega$
\begin{itemize}
\item is {\em convex} if every pair of points in $\Omega$ is contained in a segment in $\Omega$.
\item is {\em properly convex} if it is convex,  and $\cl\Omega$ does not contain a projective line.
\item  is {\em strictly convex} if it is properly convex and $\Fr\Omega$
does not contain a segment. 
\item is {\em flat} if it is convex and $\dim\Omega<n$.
\item is a {\em convex domain} if it is a properly convex open set.
\item is $C^1$ if  for each $x\in\Fr\Omega$
there is a unique supporting hyperplane  at $x$.
\end{itemize}

The same terms will be applied to a lift of $\Omega$ to $\SS^n$.
If $V=U\oplus W$,  then {\em projection from $\PP W$ onto $\PP U$} is $\pi:\PP V\setminus\PP W\longrightarrow\PP U$
given by $\pi[u+w]=[u]$. {\em Duality} is the map that sends each point $\theta=[\phi]\in\PP V^*$ to the hyperplane
$H_{\theta}=\PP\ker\phi\subset\PP V$. If $L$ is a line in $\PP V^*$ the hyperplanes $H_{\theta}$ dual to the points 
$\theta\in L$
are called a {\em pencil of hyperplanes}. Then $Q=\cap H_{\theta}$ is the projective dual of $L$ and is called the {\em core of the pencil}.
It is a codimension $2$ projective subspace.

\begin{lemma} A convex set $\Omega$ is properly convex if and only if $\cl\Omega$ is disjoint from some hyperplane.
\end{lemma}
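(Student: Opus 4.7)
The plan is to prove both directions; the reverse implication is elementary, and the forward one reduces to a Hahn--Banach separation for a convex cone in $V=\RR^{n+1}$.

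For $(\Leftarrow)$, suppose a hyperplane $H\subset\RP^n$ is disjoint from $\cl\Omega$. A projective line in $\RP^n$ meets every projective hyperplane in at least one point (a standard dimension count: $\dim L + \dim H \ge n$). Hence any projective line in $\cl\Omega$ would intersect $H$, contradicting $\cl\Omega\cap H=\emptyset$. Therefore $\cl\Omega$ contains no projective line, so $\Omega$ is properly convex.

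For $(\Rightarrow)$, I lift to the double cover $\PP_+V=S^n$ via $\pi$. The main work is to show that, because $\Omega$ is convex and $\cl\Omega$ contains no projective line, the preimage $\pi^{-1}(\Omega)$ splits into two antipodal convex components $\tilde\Omega_+$ and $-\tilde\Omega_+$, with $\cl\tilde\Omega_+\cap(-\cl\tilde\Omega_+)=\emptyset$. Granting this, form the closed positive cone $\tilde K=\{tv : t\ge 0,\ [v]_+\in\cl\tilde\Omega_+\}\subset V$. Convexity of $\tilde\Omega_+$ in $\PP_+V$ gives convexity of $\tilde K$ in $V$, while antipode-disjointness gives $\tilde K\cap(-\tilde K)=\{0\}$, so $\tilde K$ is a closed pointed convex cone. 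The standard Hahn--Banach separation theorem for such cones produces $\phi\in V^*$ with $\phi(v)>0$ for all $v\in\tilde K\setminus\{0\}$. The projective hyperplane $\PP(\ker\phi)\subset\RP^n$ is then disjoint from $\cl\Omega$, as required.

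The main obstacle is justifying the split of $\pi^{-1}(\Omega)$ into two convex halves with non-intersecting closures. The excerpt's definition of convex only requires the \emph{existence} of a connecting segment, not its uniqueness, so the lift has to be set up with some care: I would fix a basepoint $p_0\in\Omega$ with a chosen lift $\tilde p_0\in\PP_+V$, define $\tilde\Omega_+$ by path-lifting a choice of segment from $p_0$ to each $q\in\Omega$, and then verify independence of choices modulo the antipodal map. The no-projective-line hypothesis enters precisely here: if the two lifts of some point both lay in the same component, piecing together two lifted segments would produce a great semicircle in $\cl\tilde\Omega_+$ and hence, together with its antipode in the same component, a full projective line inside $\cl\Omega$. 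Once this lifting is in hand, the remainder is the routine Hahn--Banach step sketched above.
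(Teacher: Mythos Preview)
Your argument is correct, and your care about the lifting step is warranted---the paper in fact glosses over exactly this point, asserting only that a convex set containing no projective line is simply connected and hence lifts to $\Omega'\subset S^n$. Where you diverge is in the forward direction after the lift. You pass to the cone over $\cl\tilde\Omega_+$, observe it is closed, convex, and pointed, and invoke Hahn--Banach separation to produce the hyperplane. The paper instead runs an elementary induction on $n$: pick any hyperplane $H\subset S^n$; then $H\cap\Omega'$ is properly convex in $H\cong S^{n-1}$, so by induction there is a codimension-$2$ subspace $Q\subset H$ disjoint from $\Omega'$. One then sweeps the pencil of hyperplanes $\{H_\theta\}_{\theta\in S^1}$ with core $Q$: each $H_\theta\setminus Q$ has two half-discs, $\Omega'\cap H_\theta$ sits in one of them, and as $\theta$ traverses a half-circle the side must flip, so some $H_\theta$ misses $\Omega'$ altogether. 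Your route is the standard convex-analysis proof and is arguably more conceptual; the paper's pencil argument is more hands-on and entirely self-contained, needing no black-box separation theorem.
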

\begin{proof} Without loss of generality, suppose $\Omega$ is closed. The reverse implication is immediate since the complement of a hyperplane contains no line. 

Hence assume $\Omega$ is properly convex. The result is immediate if $n=1$, so assume $n>1$. Since $\Omega$ is convex and contains no projective line, it is simply connected, and so lifts to $\Omega'\subset\SS^n$.
Let $H\subset \SS^n$ be a hyperplane. Then $H\cap \Omega'$ is empty or properly convex. By induction
on dimension, $H$ contains a projective subspace $Q$  with $\dim Q=n-2$ that is disjoint from $H\cap \Omega'$. There is a pencil of hyperplanes $H_{\theta}$ with core $Q$. Now $\Omega'\cap H_{\theta}$ is contained in one of the two components of $H_{\theta}\setminus Q$. As $\theta$
moves half way around $\SS^1$ the component must change. Thus for some $\theta$ the intersection is empty.
\end{proof}

In what follows $\Omega\subset \RP^n$ is a properly convex domain, and $\Aut(\Omega)\subset\Aut(\RP^n)$ is the subgroup that preserves $\Omega$. The {\em Hilbert metric $d_{\Omega}$} on $\Omega$ is  defined as follows.
If $\ell\subset\RP^n$ is a line and $\alpha=\Omega\cap\ell\ne\emptyset$,  then $\alpha$ is a 
{\em proper segment in $\Omega$} and $\alpha=(a_-,a_+)$
with $a_{\pm}\in\Fr\Omega$. There is a projective isomorphism 
$f:\alpha\rightarrow\RR^+$
and
\[
d_{\Omega}(x,y)=\frac{1}{2}\left|\;\log \frac{f(x)}{f(y)}\;\right|
\] 
for $x,y\in\alpha$. It follows from the next result that $d_{\Omega}$ satisfies the triangle inequality, and thus is a metric. 
It
is a {\em Finsler metric}: given by a norm on the tangent space.
The factor of $1/2$ ensures that this gives the hyperbolic metric when $\Omega$ is a round disc. A {\em geodesic} in $\Omega$ is a  curve whose length is the distance between its endpoints.
It is immediate that $\Aut(\Omega)$ acts by isometries of $d_{\Omega}$. 

The interior of a triangle in the projective plane is a properly convex domain that is projectively equivalent
to the positive quadrant in an affine patch $\RR^2$. A metric ball in this domain with center $p$ is a hexagon. The vertices of the hexagon lie
on the lines through $p$ that contain a vertex of the triangle.
A smooth curve in the interior of a triangle  is a geodesic for the Hilbert metric if, and only if,   the tangent line at each point on the curve intersects the same pair
of sides of the triangle. Thus, in a properly convex domain, there may be many geodesics with the same endpoints. 

 Let $H_{\pm}$ be supporting hyperplanes for $\Omega$ at $a_{\pm}\in\Fr\Omega$, and 
 $P=H_+\cap H_-$.  {\em Projection from $P$ onto $\alpha$}  is the map $\pi:\Omega\to\alpha=(a_-,a_+)$ given by $\pi[u+v]=[v]$ 
 where $[u]\in P$ and $[v]\in\alpha$.
The choice of $H_{\pm}$ is unique only if $a_{\pm}$ are $C^1$ points. 
If $\Omega$ is strictly convex, then for each $x\in\Omega$ by (\ref{projlemma})(iii) there is a unique point $\Pi(x)\in\alpha$
closest to $x$, and $\Pi:\Omega\rightarrow\alpha$ is continuous and called the {\em nearest point retraction}.
In general $\pi \ne\Pi$.

\begin{figure}[h]
                       \centering
                 \includegraphics[scale=1.0]{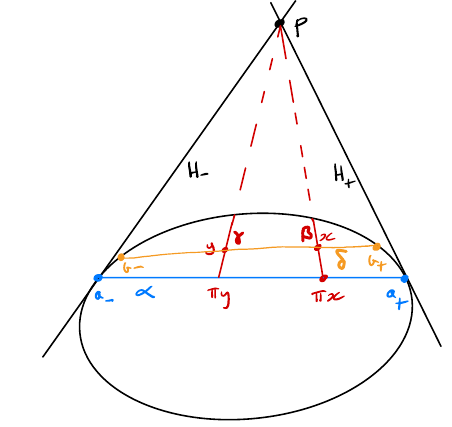}
 \caption{Projecting onto a line} \label{projection}
      \end{figure}

\begin{lemma}\label{projlemma} With the preceding notation\\ (i) $d_{\Omega}(x,y)\ge d_{\Omega}(\pi x,\pi y)$. \\
(ii) If $\Omega$ is strictly
convex,  then geodesics are segments of lines\\
(iii)  If $\Omega$ is strictly
convex, then  $\forall\ y\in\Omega\ \ \{x\in\Omega: d_{\Omega}(x,y)\le 1\}$ is strictly convex.
\end{lemma}
\begin{proof} \Cref{projection} represents  two {\em or more} dimensions.
Let $\delta=(b_-,b_+)$ be the proper segment in $\Omega$ containing $x$ and $y$, and $\pi:\Omega\rightarrow\alpha$ as above. Then $d_{\Omega}(x,y)=d_{\delta}(x,y)$. Since $(\pi|\delta):\delta\to\alpha$ is a projective embedding
$d_{\delta}(x,y)=d_{\pi\delta}(\pi x,\pi y)$. Also $d_{\pi\delta}(\pi x,\pi y)\ge d_{\alpha}(\pi x,\pi y)$ because 
$\pi\delta\subset\alpha$. Finally $d_{\alpha}(\pi x,\pi y)=d_{\Omega}(\pi x,\pi y)$, which proves (i).

Equality implies $\alpha=\pi\delta$. Thus, after relabelling if needed,  $b_{\pm}\in H_{\pm}$. Thus $[a_{\pm},b_{\pm}]\subset H_{\pm}\cap\Fr\Omega$.
If $\Omega$ is strictly convex it follows that $b_{\pm}=a_{\pm}$. This gives (ii). For (iii) see \cite{CLT1}(1.7).
\end{proof}

A {\em line} in a projective manifold is flat connected $1$-manifold. 
A  projective manifold $M$ is {\em convex} if
every pair of points in $M$ are contained in a line.  The developing map might not be injective on a lift of a line.
For example, every covering space of $\RP^1$ is convex.
It is easy to see (cf.\thinspace \Cref{fig:distance_btw_lines}) that:

\begin{proposition}\label{geodesiclimits}
Suppose that $\Omega$ is properly convex and $\gamma_1,\gamma_2:[0,1)\to\Omega$ are two lines  that are
both contained in the same projective plane, and
 $p_i=\lim_{t\to1}\gamma_i(t)\in\Fr\Omega$. Let $f(t)=d_{\Omega}(\gamma_1(t),\gamma_2)$. 
 \begin{enumerate}
\item If $p_1=p_2$, then $f$ is a non-increasing function. 
Moreover, $f(t)\to 0$ if and only if $\Omega$ is $C^1$ at $p_1$.
 \item If $p_1\ne p_2$, then $f$ is bounded if and only if  there is a segment in $\Fr\Omega$ that contains both $p_1$ and $p_2$ in its interior.
\end{enumerate}\end{proposition}

\begin{figure}[h]
                       \centering
\includegraphics[scale=0.9]{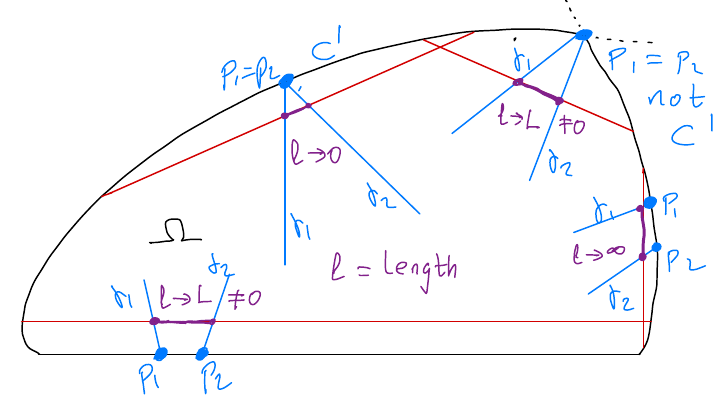}
 \caption{Distance between two lines near infinity}
 \label{fig:distance_btw_lines}
\end{figure}

A subset $\Ccal\subset V_0$ is a {\em cone} if $t\cdot\Ccal=\Ccal$ for all $t>0$.
  The {\em dual cone} $\Ccal^*=\interior\left(\{\phi\in V^*:\ \phi(\Ccal)\ge 0\}\right)$ is convex.  
  If $\Omega\subset\PP_+ V$,  then $\Ccal\Omega$ is the cone $\{v\in V_0:\ [v]_+\in\Omega\}$.
  If $\Omega$ is open and properly convex, then the {\em dual domain} 
  $\Omega^*=\PP(\Ccal\Omega^*)$ is open, and properly convex. Using the natural identification $V\cong V^{**}$
  it is immediate that $(\Omega^*)^*=\Omega$.
 \begin{corollary}\label{C1domain} If $\Omega$ is open and properly convex,  
 then $\Omega$ is $C^1$ (resp. strictly convex)
 if and only if $\Omega^*$ is strictly convex (resp. $C^1$).
 \end{corollary}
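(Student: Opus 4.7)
The plan is to exploit the standard duality that identifies points of $\partial \Omega^*$ with supporting hyperplanes of $\Omega$ (and vice versa, using $(\Omega^*)^*=\Omega$). Under this identification, $\Omega^*$ fails to be $C^1$ at $\theta \in \partial\Omega^*$ precisely when the hyperplane $H_\theta$ supports $\Omega$ at two or more distinct points, while $\Omega$ fails to be strictly convex precisely when some supporting hyperplane of $\Omega$ contains a nontrivial segment of $\partial\Omega$. So I want to show these two failures are the same phenomenon.

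First I would record the basic duality dictionary. Working with cones, a nonzero $\phi\in V^*$ lies in $\cl\Ccal\Omega^*$ iff $\phi \geq 0$ on $\cl \Ccal\Omega$, and $\phi \in \partial \Ccal\Omega^*$ iff in addition $\phi(v)=0$ for some $v\in \cl\Ccal\Omega\setminus\{0\}$. Projectively this gives: $\theta \in \partial\Omega^*$ iff $H_\theta$ is a supporting hyperplane to $\Omega$ at some boundary point, and the set of $x\in\partial\Omega$ at which $H_\theta$ supports $\Omega$ is exactly $H_\theta\cap \cl\Omega$. Applying the same statement to $\Omega^{**}=\Omega$ yields the dual: for $x\in\partial\Omega$, the supporting hyperplanes to $\Omega^*$ at $x$ (now viewed as a hyperplane in $\PP V^{**}$) are in bijection with $\{\theta\in\partial\Omega^*: x\in H_\theta\}$, i.e.\ with the supporting hyperplanes to $\Omega$ at $x$.

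Now assume $\Omega^*$ is not $C^1$, so some $\theta\in\partial\Omega^*$ admits two distinct supporting hyperplanes. By the dictionary, these correspond to two distinct points $x,y \in \partial\Omega$ with $H_\theta$ supporting $\Omega$ at both. The segment $[x,y]$ lies in $\cl\Omega \cap H_\theta \subset \partial\Omega$, so $\Omega$ is not strictly convex. Conversely, if $\Omega$ is not strictly convex, pick distinct $x,y\in \partial\Omega$ with $[x,y]\subset \partial\Omega$. Choose any supporting hyperplane $H$ at the midpoint $z$ of $[x,y]$. Since $\cl\Omega$ lies in one closed half-space bounded by $H$ while $z$ is in the relative interior of the segment $[x,y]\subset\cl\Omega$, both endpoints $x,y$ must lie in $H$; thus $[x,y]\subset H$. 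Writing $H=H_\theta$ with $\theta\in\partial\Omega^*$, the dictionary gives two distinct supporting hyperplanes to $\Omega^*$ at $\theta$, so $\Omega^*$ is not $C^1$.

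The main obstacle is really just setting up the duality dictionary cleanly: one has to verify that a linear functional supporting the cone $\Ccal\Omega$ vanishes on a full face (namely $H_\theta \cap \cl\Omega$) and that the second dual correctly recovers the original supporting-hyperplane structure. The convexity argument that forces $x,y\in H$ given $z\in H\cap[x,y]$ and $\cl\Omega$ on one side is short but worth stating carefully, since it is where strict convexity actually enters.
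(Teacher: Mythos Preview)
Your argument is correct and is essentially the same duality idea as the paper's proof. The paper phrases it in terms of pencils: a segment $\sigma=[\phi_1,\phi_2]\subset\Fr\Omega^*$ dualizes to a pencil of hyperplanes whose codimension-$2$ core $Q=\ker\phi_1\cap\ker\phi_2$ must meet $\cl\Omega$ in $\Fr\Omega$, exhibiting a non-$C^1$ point of $\Omega$, and then one swaps $\Omega$ with $\Omega^*$ via $(\Omega^*)^*=\Omega$; you instead set up the dictionary ``supporting hyperplanes to $\Omega^*$ at $\theta$ $\leftrightarrow$ points of $H_\theta\cap\partial\Omega$'' and argue the direct equivalence. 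Both routes reduce to the same elementary fact you isolate: a supporting functional that vanishes at an interior point of a boundary segment vanishes on the whole segment.
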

 \begin{proof} We have $[\phi]\in\Fr\Omega^*$ if and only if the dual hyperplane $H=\PP\ker\phi$ 
 supports $\Omega$. In this case we will always assume that $\phi(\Omega)\ge 0$.
If $p\in\Fr\Omega$, then there are
two distinct supporting hyperplanes $H_0=\PP\ker\phi_0$ and $H_1=\PP\ker\phi_1$ for $\Omega$ at $p$ 
if and only if
all the hyperplanes $H_t=\PP\ker \phi_t$ contain $p$ and support $\Omega$, where $\phi_t=(1-t)\phi_0+t\phi_1$ and $0\le t\le 1.$
This happens if and only if the segment
$\sigma=\{[\phi_t]:\ 0\le t\le 1\}$ is in $\Fr\Omega^*$. Hence $\Omega$ is $C^1$ if and only if $\Omega^*$ is strictly
convex. Replacing $\Omega$ by $\Omega^*$ and using $\Omega^{**}=\Omega$ gives the other result.
\end{proof}

 If $\Gamma\subset\Aut(\Omega)$
is  a discrete and torsion-free subgroup, then
 $M=\Omega/\Gamma$   is a {\em properly (resp. strictly) convex} manifold if $\Omega$ is properly (resp. strictly) convex.
Then $\widetilde M=\Omega$
and there is a development pair $(\dev,\rho)$, where $\dev$ is the identity map, and the holonomy $\rho:\pi_1M\to\Gamma$ is an isomorphism.

The {\em tautological line bundle} is the quotient map $\pi:V_0\rightarrow\PP V$. If $M$ is a projective manifold modelled on $\PP V$,  then the pull-back of this
line bundle by the developing map $\dev:\widetilde M\rightarrow \PP V$ is a line bundle over $\widetilde M$. Covering transformations induce
bundle isomorphisms, and the quotient is a line bundle over $M$ called  the {\em tautological line bundle}. It is a trivial bundle, since the lines
can be oriented to point way from $0$.

This bundle has a natural structure
as a radiant affine manifold. In the case that  $M=\Omega/\Gamma$ is properly convex,
the holonomy lifts to $\widetilde\Gamma\subset \SL_{\pm} V$. Then
this bundle over $\widetilde M$ is naturally identified with $\Ccal\Omega$ and
the quotient $\Ccal\Omega/\widetilde\Gamma$ is a radiant affine manifold (\ref{RAbundle}) that can be naturally identified with the tautological line bundle over $M$.
This plays a key role in the proof of the Open Theorem.
If $X$ is a subset of a metric space $Y$, the $r$-neighborhood of $X$ is 
\[N_r(X)=N(X,r)=\{y\in Y: d(y,X)\le r\}\]

We now use a compactness argument to show that if $\Omega$ is the universal cover of a closed strictly convex manifold, then projection 
from a projective subspace onto a geodesic in $\Omega$ is
uniformly distance decreasing in the following sense.
 
 \begin{lemma}\label{strictprojection} Suppose $M=\Omega/\Gamma$ is a strictly convex closed manifold. Given  $b>1$ there is  $R=R(b)>0$ so the following holds.
Suppose $\pi:\Omega\to\alpha$ is a projection  onto a geodesic $\alpha$.  Suppose that $\eta$ is a rectifiable arc in
 $\Omega\setminus N_R(\alpha)$
of length $\ell$ with endpoints $x$ and $y$. Then $d_{\Omega}(\pi x,\pi y)< 1+(\ell/b)$ \end{lemma}
 \begin{proof} By decomposing $\eta$ into  finitely many subarcs, with one of length at most $b$, and $\lfloor \ell/b \rfloor$ of length $b$,
 it suffices to prove if $\length(\eta)\le b$, then $d_{\Omega}(\pi x,\pi y)\le 1$.
  
 Write $d=d_{\Omega}$. If no such $R$ exists,  then there are sequences $x_k,y_k\in\Omega$ and projections $\pi_{_k}:\Omega\to\alpha_k$
 with $d(x_k,y_k)\le b$ and $d(x_k,\alpha_k)\ge k$ and $1\le d(\pi_{_k} x_k,\pi_{_k} y_k)\le b$. Then $\beta_k=[\pi_k x_k,x_k]$ and $\gamma_k=[\pi_{_k} y_k,y_k]$
 are geodesic segments and $\pi_{_k}\beta_k=\pi_{_k} x_k $ and $\pi_{_k}\gamma_k=\pi_{_k}y_k$. 
 
   There is a compact subset $W\subset\Omega$ such that $\Gamma\cdot W=\Omega$. By applying an element of $\Gamma$ we may assume that $\pi_{_k}x_k\in W$.
  After subsequencing we may assume $\beta=\lim \beta_k$, $\gamma=\lim \gamma_k$, $\alpha=\lim\alpha_k$ and $\pi=\lim \pi_{_k}$ all exist and $\pi:\Omega\to\alpha$ is
   projection. Refer to \Cref{projection}. Thus $\beta$ are $\gamma$ are geodesic rays in $\Omega$ that start on $\alpha$, and end on $\Fr\Omega$, with
  $\pi(\beta)=\alpha\cap\beta\ne \alpha\cap\gamma=\pi(\gamma)$.
  
 Since $\Omega$ is strictly convex, and $d(x_k,y_k)\le b$, it follows from (\ref{geodesiclimits}) that $x_k$ and $y_k$ 
  limit on the same point  $p\in \Fr\Omega$. Hence $p$ is the endpoint of both $\beta$ and $\gamma$ on $\Fr\Omega$. 
  Also $p\in P=H_-\cap H_+$, where $H_{\pm}$ are the supporting hyperplanes to $\Omega$ at the endpoints of $\alpha$.
   Since $\Omega$ is strictly convex, $p\notin P$. It follows that $\pi(\beta)=\pi(p)=\pi(\gamma)$ which is a contradiction.
   \end{proof}

 The following is due to Benoist \cite{MR2094116}.
    A {\em triangle}  is a disc $\Delta$ in a projective plane bounded by
 three segments.   If $\Delta\subset\cl\Omega$ and $\Delta\cap\Fr\Omega=\bdy\Delta$, then $\Delta$ is
  called a {\em properly embedded triangle} or {\em PET}.
  A properly convex set $\Omega$ has  {\em thin triangles} if there is $\delta>0$ such that for every triangle
 $T$ in $\Omega$, each side of $T$ is contained in a $\delta$-neighborhood of the union of the other two sides with respect to the Hilbert metric.
  
  \begin{proposition}\label{PETstrict} If $M=\Omega/\Gamma$ is a properly convex closed manifold, then the following are equivalent:
  \begin{enumerate}
\item  $M$ is strictly convex.
\item $\Omega$ does not contain a PET. 
\item $\Omega$ has thin triangles.
\end{enumerate}
 \end{proposition}
 \begin{proof}  $(ii)\Rightarrow(i):$ If $\Omega$ is not strictly convex, there is a maximal segment $\ell\subset\Fr\Omega$. Choose $x\in\Omega$ and let $P\subset\Omega$ 
be the interior of the triangle that is the convex hull of $x$ and $\ell$. Choose a sequence $x_n$ in $P$ that limits on the midpoint of $\ell$.
By (\ref{geodesiclimits}) $d_{\Omega}(x_n,\Fr P)\to\infty$ because $\ell$ is maximal. Since $M$ is compact there is compact set $W\subset\Omega$
such that $\Gamma\cdot W=\Omega$. Thus there is $\gamma_n\in\Gamma$ with $\gamma_nx_n\in W$. After
choosing a subsequence  $\gamma_nx_n\to y\in W$, and $\gamma_n P$ converges to the interior of a PET
 $\Delta\subset\Omega$ that contains $y$. 
 
 $(iii)\Rightarrow(ii):$ If $\Omega$ contains a PET $\Delta$,  then $d_{\Omega}|\Delta=d_{\Delta}.$ Now $\PGL(\Delta)$ contains a subgroup $G\cong\RR^2$ that acts transitively on $\Delta$.
Therefore $(\Delta,d_{\Delta})$ is isometric to a normed vector-space, thus does not
have thin triangles.

$(i)\Rightarrow(iii):$ If $\Omega$ does not have thin triangles, then there is a sequence  of triangles $T_k$ in $\Omega$
and points $x_k$ in $T_k$
with $d_{\Omega}(x_k,\bdy T_k)>k$. As above, after applying elements of $\Gamma$, we may assume $T_k$
converges to a PET, so $\Omega$ is not strictly convex.
 \end{proof}

It follows that in dimension $2$ either $\Omega$ is a triangle (and $M$ is a torus or Klein bottle) or else is strictly convex. Using a basic fact from geometric group theory,  it follows that a properly convex manifold (of any dimension) is strictly convex if and only if $\pi_1M$ has thin triangles.
This is an algebraic characterization
 of strictly convex. However we will give a direct geometric argument.
 
 \gap

   Given $K\ge 1$ and $L\ge 0$, a {\em $(K,L)$--quasi-isometric embedding}  is a map $f:X\to Y$ between metric spaces $(X,d_X)$ and $(Y,d_Y)$ such that
  $$K^{-1}(d_X(x,x')-L)\;\le\; d_Y(fx,fx')\;\le\; K\;d_X(x,x')+L $$
for all $x,x'\in X.$
     If $X=[a,b]\subset \RR$, then $f$ is a {\em quasi-geodesic}.    
  The map $f$ is a {\em quasi-isometry} or {\em QI} if also $Y\subset N(fX,L)$. 
 
  If $g:Y\to Z$ is a $(K',L')$--QI, then $g\circ f$ is a $(KK',K'L+L')$--QI.   
           In particular, if $g$ is a QI 
   and $f$ is a quasi-geodesic, then $g\circ f$ is a quasi-geodesic
   with QI constants that only depend on
those of $f$ and $g$.

  If $G$ is a finitely generated group, then  a choice of finite generating set gives a {\em word metric} on $G$.
   A different choice of generating set gives a bi-Lipschitz equivalent metric. The \v{S}varc-Milnor lemma in this setting is

  \begin{proposition}\label{QIhomeo} If $M=\Omega/\Gamma$ is a closed and properly convex manifold,
 then $(\Omega,d_{\Omega})$ is QI to $\pi_1M$. \end{proposition}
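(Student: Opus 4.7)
The plan is to apply the standard \v{S}varc--Milnor argument to the isometric action of $\Gamma=\pi_1M$ on $(\Omega,d_\Omega)$. I would first record the three required ingredients. Line segments are geodesics in $d_\Omega$, so $(\Omega,d_\Omega)$ is geodesic. Closed $d_\Omega$-balls in $\Omega$ are compact, a standard feature of Hilbert metrics on properly-convex open sets, so $(\Omega,d_\Omega)$ is proper. Finally, $\Gamma$ acts by isometries (as noted in the text), freely and properly discontinuously since $M$ is a manifold, and cocompactly since $M$ is closed; so there is a compact $W\subset\Omega$ with $\Gamma\cdot W=\Omega$. Fix a basepoint $x_0\in W$.

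Next I would produce a finite generating set. Let $W'=\overline{N_1(W)}$, which is compact, and define
\[ S=\{g\in\Gamma:\ gW\cap W'\neq\emptyset\}. \]
Since the action is properly discontinuous and $W'$ is compact, $S$ is finite. To show that $S$ generates $\Gamma$, given $g\in\Gamma$ let $\alpha$ be the line-segment geodesic from $x_0$ to $gx_0$ and choose points $x_0=y_0,y_1,\ldots,y_N=gx_0$ on $\alpha$ with $d_\Omega(y_i,y_{i+1})<1$ and $N\le d_\Omega(x_0,gx_0)+1$. For each $i$ pick $g_i\in\Gamma$ with $y_i\in g_iW$, taking $g_0=e$ and $g_N=g$. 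Then $g_{i+1}W$ meets $g_iW'$, so $g_i^{-1}g_{i+1}\in S$. Hence $g=(g_0^{-1}g_1)\cdots(g_{N-1}^{-1}g_N)$ is a product of at most $N$ elements of $S$, giving $d_S(e,g)\le d_\Omega(x_0,gx_0)+1$.

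The orbit map $\Phi:\Gamma\to\Omega$, $g\mapsto gx_0$, is then a quasi-isometry. The inequality just proved is one of the two bounds. The reverse bound $d_\Omega(x_0,gx_0)\le D\cdot d_S(e,g)$, where $D=\max_{s\in S}d_\Omega(x_0,sx_0)<\infty$, follows by writing $g$ as a product of generators and applying the triangle inequality together with $\Gamma$ acting by isometries. Both bounds are equivariant, and $\Gamma\cdot W=\Omega$ places every point of $\Omega$ within $\diam W$ of $\Gamma\cdot x_0$, so $\Phi$ has coarsely dense image. There is no substantive obstacle; the argument is \v{S}varc--Milnor essentially verbatim. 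The only non-formal ingredients are properness of $d_\Omega$ and the compact fundamental set $W$, both standard.
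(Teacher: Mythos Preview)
Your proof is correct and is the same approach the paper takes: the paper states this as an instance of the \v{S}varc--Milnor lemma and its proof is the single line ``Fix $x\in\widetilde M=\Omega$; then $f:\pi_1M\to\widetilde M$, $f(g)=\tau_g(x)$, is a QI.'' You have simply unpacked that citation in full detail, verifying the geodesic, proper, and cocompact hypotheses and then running the standard argument.
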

 \begin{proof} Fix $x\in\widetilde M=\Omega$. Then $f:\pi_1M\rightarrow\widetilde M$ given by $f(g)=\tau_g(x)$ is a QI.
  \end{proof}
 
   A metric space $(Y,d_Y)$ is {\em ML} if it satisfies the  {\em Morse Lemma}, i.e.\thinspace for all $(K,L)$ there is $S=S(K,L)>0$, called a {\em tracking constant}, such that if $\alpha$ and $\beta$ are $(K,L)$--quasi-geodesics in $Y$
  with the same endpoints,  then  $\alpha\subset N_{S}(\beta)$ and $\beta\subset N_{S}(\alpha)$. 
  Since quasi-geodesics are sent to quasi-geodesics by quasi-isometries, it follows that the property of ML is preserved by quasi-isometry.

  Clearly $\RR^2$ is not ML, and any norm on $\RR^2$ is QI to the standard norm, so $\RR^2$ with any norm is not ML.
  A geodesic in the Cayley graph of  $G$ picks out a sequence in $G$ that is a quasi-geodesic.
 It follows from (\ref{QIhomeo}) that whether or not a closed properly convex manifold is ML only depends on $\pi_1M$.

 \begin{proposition}\label{stricthyperbolic} If $M=\Omega/\Gamma$ is a properly convex closed manifold, then $M$ is strictly convex if and only if $(\Omega,d_{\Omega})$ is {\em ML}.
 \end{proposition}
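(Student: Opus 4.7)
My plan is to prove both implications by exploiting the ``thin triangles'' characterisation of strict convexity in (\ref{PETstrict}), together with the fact from (\ref{projlemma})(ii) that geodesics in a strictly-convex domain are precisely projective line segments.

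For the forward direction, assume $\Omega$ is strictly-convex. By (\ref{projlemma})(ii) the geodesic triangles in $(\Omega,d_\Omega)$ coincide with the projective triangles, so (\ref{PETstrict}) produces a $\delta>0$ such that every geodesic triangle is $\delta$-thin: $(\Omega,d_\Omega)$ is a proper geodesic Gromov $\delta$-hyperbolic space. The Morse Lemma in any such space is classical. If one prefers to stay within the projective framework of the paper, the same conclusion can be obtained directly from (\ref{strictprojection}) by a subdivision argument: cover a $(K,L)$-quasi-geodesic $\alpha$ from $x$ to $y$ by short pieces of diameter at most some fixed $b$, and for those whose midpoints lie farther than $R=R(b)$ from the segment $[x,y]$, apply (\ref{strictprojection}) to the projection onto $[x,y]$ to see that the endpoints of such a piece project within distance $1$; summing over consecutive pieces then bounds the total projected length and forces $\alpha$ to lie in a bounded neighbourhood of $[x,y]$, with a constant depending only on $(K,L)$ and $\Omega$. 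Applying the same bound to a second quasi-geodesic $\beta$ between $x$ and $y$ yields the tracking constant.

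For the reverse direction I would argue the contrapositive. If $\Omega$ is not strictly-convex, (\ref{PETstrict}) provides a PET $\Delta\subset\cl\Omega$ whose interior with the restricted Hilbert metric is isometric to a $2$-dimensional normed vector space $(V,\|\cdot\|)$, and this isometry matches $d_\Omega|_{\interior\Delta}$ with $d_\Delta$. In such a flat space ML visibly fails: for each $t>0$ take an ``equilateral'' triangle of side length $t$ in $V$; the single side $\alpha_t$ and the two-sided detour $\beta_t$ through the opposite vertex, both parametrised by arc length, are $(1,0)$-quasi-geodesics with common endpoints, yet the Hausdorff distance between their images grows linearly in $t$. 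Transporting these curves into $\interior\Delta\subset\Omega$ yields $(1,0)$-quasi-geodesics in $(\Omega,d_\Omega)$ with common endpoints whose separation is unbounded, contradicting ML.

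The main obstacle is the forward implication, specifically the passage from thin triangles to stability of quasi-geodesics. An appeal to the standard Gromov-hyperbolic Morse Lemma short-circuits this, but the cleanest self-contained route consistent with the style of the paper is the projection argument above; the delicate point is bookkeeping the subdivision so that the contraction supplied by (\ref{strictprojection}) telescopes into a bound depending only on $(K,L)$ and $\Omega$.
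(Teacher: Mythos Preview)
Your approach is essentially the paper's. For the reverse direction the paper is even terser than you: it just says a PET is isometric to a normed plane, hence not ML. Your equilateral-triangle construction fleshes this out, but there is a slip: the two-sided detour $\beta_t$ is \emph{not} a $(1,0)$-quasi-geodesic, since its endpoints are at distance $t$ but arc-length $2t$ apart; it is a $(K,0)$-quasi-geodesic for some fixed $K\ge 2$ independent of $t$, and that is all you need.

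For the forward direction, your second route via (\ref{strictprojection}) is exactly what the paper does, though the paper organises the bookkeeping slightly differently from your subdivision sketch: it takes a \emph{maximal} sub-arc $\alpha$ of the quasi-geodesic lying at distance $\ge R=R(2K)$ from the straight segment $\beta'=[p,q]$, uses (\ref{strictprojection}) to get $\length(\pi\alpha)\le \length(\alpha)/(2K)$, then feeds this back through the quasi-geodesic inequality $\length(\alpha)\le K\,d(x,y)+L$ together with $d(x,y)\le 4R+\length(\pi\alpha)$ to obtain $\length(\alpha)\le 2(4RK+L)$. This feedback step is the ``telescoping'' you allude to but do not spell out. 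Your first route through thin triangles and the classical Gromov-hyperbolic Morse Lemma is correct, but it is precisely the appeal to word-hyperbolic machinery that the paper states it is trying to avoid.
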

 \begin{proof} If $\Omega$ is not strictly convex, then by (\ref{PETstrict}) it contains a PET $\Delta$. The metric $d_{\Omega}$ restricted to $\Delta$ is isometric to a norm on $\RR^2$. Hence $(\Omega,d_{\Omega})$ is not {\em ML}.

 Now suppose $\Omega$ is strictly convex. A quasi-geodesic is {\em nice} if the image consists of
 finitely many line segments and each line segment is parameterized by arc length.
Every  $(K,L)$--quasi-geodesic $\alpha$ in $\Omega$
 is approximated by a nice $(K',L')$-quasi-geodesic $\overline{\alpha}$ in the sense that 
 $\alpha\subset N(\overline{\alpha},D)$ and $\overline{\alpha}\subset N(\alpha,D)$ and $(D, K',L')$ only depends on $(K,L)$. Thus it suffices to prove there is a tracking constant for nice quasi-geodesics. 
 
 \begin{figure}[h]
                       \centering\includegraphics[scale=1.5]{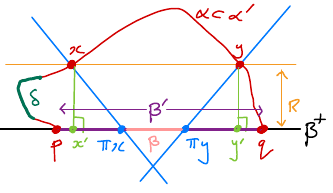}
 \caption{Quasi-geodesics} \label{quasigeodesic}
\end{figure}
 
Refer to \Cref{quasigeodesic}. 
 Let $\alpha'$ be a nice $(K,L)$--quasi-geodesic with endpoints $p$ and $q$.
  Let $\beta'$ be the line segment with the same endpoints as $\alpha'$.
  Let $R=R(2K)$ be given by (\ref{strictprojection}) and set
$S'=R+4RK+K+L$.
 We show that $S=2KS'+2L+2S'$ is a tracking constant.
 Let $\beta^+$ be the geodesic in $\Omega$ that contains $\beta'$ and $\pi:\Omega\rightarrow\beta^+$ be a projection.\\
 Claim 1) $\alpha'\subset N(\beta^+,S')$.\\
Claim 2) $\alpha'\subset N(\beta',S/2)$ and $\beta'\subset N(\alpha',S/2)$.

 Assuming the claims, if $\gamma'$ is another nice $(K,L)$--quasi-geodesic with endpoints $p$ and $q$,  then
  $$\beta'\subset N(\gamma',S/2)\qquad{\rm so}\qquad\alpha'\subset N(\beta',S/2)\subset N(N(\gamma',S/2),S/2)=N(\gamma',S)$$
 which proves that $S$ is a tracking constant. Hence $(\Omega,d_{\Omega})$ is ML.\gap

For Claim 1 suppose that $\alpha$ is the closure of a component of $\alpha'\setminus N(\beta^+,R)$. The endpoints
 $x$ and $y$ of $\alpha$ satisfy $d(x,\beta^+)=d(y,\beta^+)=R$. Let $x',y'\in\beta^+$ be chosen so that $d(x,x')=R=d(y,y')$ and 
 let $\beta=[\pi x,\pi y]\subset\beta^+$.  By (\ref{strictprojection}) $$\length(\beta)\le  1+\length(\alpha)/2K$$ by definition of $R$. 
Since $\pi$ is distance non-increasing $d(x',\pi x)=d(\pi x',\pi x)\le d(x', x)=R$, and similarly $d(y',\pi y)\le R$. Thus
 $$d(x,y)\le d(x,x')+d(x',\pi x)+d(\pi x,\pi y)+d(\pi y,y')+d(y',y)\le 4R+\length(\beta)$$
  Since $\alpha$ is parametrized
  by arc length, the domain of $\alpha$ is  $\length(\alpha)$, Since $\alpha$ is a 
  $(K,L)$--quasi-geodesic we have 
  $$\begin{array}{lrcl}
 & K^{-1}(\length(\alpha)-L)& \le & d(x,y)\\
 \Rightarrow & \length(\alpha)   &\le & K\cdot d(x,y)+L \\
&  & \le & K(4R +\length(\beta))+L\\
&  & \le & K(4R+1+\length(\alpha)/2K)+L\\
&  & = &4RK+K+L+\length(\alpha)/2\\
 \Rightarrow & (1/2)\length(\alpha)&\le& 4RK+K+L
 \end{array}$$
 Since the endpoints of $\alpha$ are in $N(\beta^+,R)$ it follows that
 $$\alpha\subset N(\ N(\beta^+,R)\ ,\ (1/2)\length(\alpha))=N(\beta^+,R+(1/2)\length(\alpha))$$ Now
 $$R+(1/2)\length(\alpha)\le R+4RK+K+L=S'$$
 This proves Claim 1.
  
For Claim 2, let $\Pi:\alpha'\rightarrow\beta^+$ be the nearest point retraction. This map is continuous, and $p,q$ are both in the image, so the image of
 $\Pi\circ\alpha'$
 contains  $\beta'$.  If $x'\in\beta'$ there is $y'\in\alpha'$ with $\Pi y'=x'$. Then $d(\beta^+,y')=d(x',y')$, and by Claim 1 $d(\beta^+,y')\le S'$ 
 so $d(x',y')\le S'$ thus
 $$ \beta'\subset N(\alpha',S')\subset N(\alpha',S/2)\qquad 
\&
\qquad\eta=\Pi^{-1}(\beta')\subset N(\beta',S')\subset N(\beta',S/2)$$
 Let $\delta\subset\alpha'$ be the closure of a component of
 $\alpha'\setminus\eta$, thus $\delta$ is an arc in the interior of $\alpha'$. Let $r,s$ be the endpoints of  $\delta$. 
 Then $\Pi(r)=\Pi(s)\in\{p,q\}$.   By Claim 1, 
 $$d(r,s)\le d(r,\Pi(r))+d(\Pi(r),\Pi(s))+d(\Pi(s),s)\le S'+0+S'=2S'$$
 Since $\delta$ is a  nice $(K,L)$--quasi-geodesic
  $$\length(\delta)\le K\cdot d(r,s) +L\le K\cdot 2S'+L$$ By Claim 1, the endpoints of $\delta$ are distance at most $S'$ from $\beta'$ so $$\delta\subset N(N(\beta',S'),(1/2)\length(\delta))\le N(\beta',S'+(1/2)(K\cdot 2S'+L))\subset N(\beta',S/2)$$
    Since $\delta\cup\eta\subset N(\beta',S/2)$ it follows that $\alpha'\subset N(\beta',S/2)$, proving Claim 2. \end{proof}
 
 
In the proof of the Closed Theorem, we construct a properly convex manifold with the correct holonomy. In order to know this manifold is closed and strictly convex we use the pair of results (\ref{htpyequiv}) and (\ref{stricthomeo}).
A properly convex manifold has contractible universal cover. Whitehead's theorem (that a weak homotopy equivalence between CW complexes is a homotopy equivalence) implies that if $M$ and $M'$ are properly convex and $\pi_1M\cong\pi_1M'$,  then $M$ and $M'$ are homotopy equivalent. This is key in the following result.

 \begin{lemma}\label{htpyequiv} Suppose $M$ and $M'$ are properly convex manifolds and
 $\pi_1M\cong\pi_1M'$. If $M$ is
closed, then $M'$ is closed.
\end{lemma}
\begin{proof} Let $\pi:\Omega\to M$ be the projection. This is the universal covering space of $M$, so $M$ is
a $K(\pi_1M,1)$. The same holds for $M'$. Hence $M$ and $M'$ are homotopy equivalent. If $n=\dim M$, then $M$ is closed if and only if 
$H_n(M;\ZZ_2)\cong\ZZ_2$. Since homology is an invariant of homotopy type the result follows.\end{proof}

\begin{corollary}\label{stricthomeo} Suppose $M$ and $N$ are closed and properly convex, and that $\pi_1M\cong\pi_1N$. If $M$ is strictly convex, then $N$ is strictly convex.
\end{corollary}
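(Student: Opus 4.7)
The plan is to assemble the results (\ref{QIhomeo}) and (\ref{stricthyperbolic}) and the fact that the ML property is preserved under quasi-isometry, which was noted in the discussion between them. The chain of reasoning is essentially: strictly-convex $\Leftrightarrow$ ML, and ML transfers across quasi-isometries of the universal covers induced by the isomorphism $\pi_1 M \cong \pi_1 N$.

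First I would write $M = \Omega/\Gamma$ and $N = \Omega'/\Gamma'$ as quotients of properly-convex domains by discrete torsion-free groups $\Gamma,\Gamma'\subset \PGL(V)$. By (\ref{QIhomeo}), the \v{S}varc-Milnor style argument gives that $(\Omega,d_\Omega)$ is quasi-isometric to $\pi_1 M$ with a word metric, and $(\Omega',d_{\Omega'})$ is quasi-isometric to $\pi_1 N$ with a word metric. Since $\pi_1 M \cong \pi_1 N$ as abstract groups, and any two word metrics on a finitely generated group arising from finite generating sets are bi-Lipschitz equivalent, the groups $\pi_1 M$ and $\pi_1 N$ are quasi-isometric. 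Composing the three quasi-isometries (using that a composition of QIs is a QI, as recalled in the paper), we conclude that $(\Omega,d_\Omega)$ and $(\Omega',d_{\Omega'})$ are quasi-isometric.

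Next I would apply (\ref{stricthyperbolic}): since $M$ is strictly-convex, $(\Omega,d_\Omega)$ is ML. Because ML is preserved by quasi-isometry (the tracking constants for $(K,L)$-quasi-geodesics in the target can be read off from tracking constants in the source combined with the QI data, and the paper records this right after defining ML), it follows that $(\Omega',d_{\Omega'})$ is also ML. Applying (\ref{stricthyperbolic}) in the reverse direction to the properly-convex closed manifold $N$ then shows that $N$ is strictly-convex.

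There is no real obstacle here; the one small check is that the QI between $\pi_1 M$ and $\pi_1 N$ induced by an abstract isomorphism is genuinely a QI of their word metrics, but this is immediate from bi-Lipschitz equivalence of word metrics on a single finitely generated group and the fact that an isomorphism transports one generating set to another. The substance of the corollary sits entirely inside (\ref{PETstrict}), (\ref{QIhomeo}), and (\ref{stricthyperbolic}), and the role of this proof is only to package them.
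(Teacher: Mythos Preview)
Your argument is correct and is precisely the paper's approach: the paper's proof is the one-liner ``Follows from (\ref{stricthyperbolic}) and (\ref{QIhomeo})'', and you have simply unpacked that, together with the QI-invariance of ML noted after its definition.
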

\begin{proof} Since ML is preserved by quasi-isometry, it follows from (\ref{stricthyperbolic}) and (\ref{QIhomeo}) that  $M$ is strictly convex if and only if  $\pi_1M$ is ML, and this is determined by $\pi_1M$.
\end{proof}


The remaining discussion in this sections comes logically after \Cref{sec:cones} since it depends on (\ref{dualcpct}), but it fits better into the narrative here. We let the reader choose their preferred order.

    The {\em displacement distance of $\gamma\in\Aut(\Omega)$} is
$t(\gamma)=\inf\{d_{\Omega}(x,\gamma x): x\in\Omega\}$. The element $\gamma$ is called {\em hyperbolic} if $t(\gamma)>0$. 
It is {\em elliptic} if it fixes a point in $\Omega$ and {\em parabolic} if it has no fixed point in $\Omega$ but $t(\gamma)=0$.
Proposition (2.1) in \cite{CLT1} says $t(\gamma)=\log|\lambda/\mu|$ where $\lambda$ and $\mu$ are the eigenvalues of 
$t(\gamma)$ of largest and smallest modulus.
We will not make use of elliptics or parabolics in this paper.

\begin{lemma}[Hyperbolics]\label{hyperbolic} Suppose $\Omega\subset \PP V$ and $M=\Omega/\Gamma$ 
is a strictly convex closed manifold and $1\ne \gamma\in\Gamma$. 
Then
 $\gamma$ is hyperbolic and there are
$a_{\pm}\in\Fr(\Omega)$ such that for all $x\in\PP V\setminus (H_+\cup H_-)$ we have
\[\lim_{n\to\pm\infty}\gamma^nx=a_{\pm}\]
where $H_{\pm}$ is the supporting hyperplane to $\Omega$ at $a_{\pm}$.
\end{lemma}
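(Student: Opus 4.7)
The proof has three stages: (i) show $\gamma$ is hyperbolic, (ii) show each orbit in $\Omega$ converges to two distinct boundary points $a_\pm\in\Fr\Omega$, and (iii) translate orbit convergence into the spectral structure of $\gamma$ and deduce the dynamics on all of $\PP V$. The main obstacle is (ii), converting the Morse Lemma of (\ref{stricthyperbolic}) into genuine convergence of orbits to single boundary points.

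For (i), assume for contradiction $t(\gamma)=0$ and pick $x_n\in\Omega$ with $d_\Omega(x_n,\gamma x_n)\to 0$. By cocompactness of $\Gamma$ choose $\eta_n\in\Gamma$ so that $y_n:=\eta_n x_n$ lies in a fixed compact set, and pass to a subsequence with $y_n\to y_\infty\in\Omega$. The conjugates $\alpha_n:=\eta_n\gamma\eta_n^{-1}$ satisfy $d_\Omega(y_n,\alpha_n y_n)\to 0$. Because $\Aut(\Omega)$ acts properly on the proper metric space $(\Omega,d_\Omega)$ by isometries and $\Gamma$ is discrete, the sequence $\{\alpha_n\}$ eventually lies in a compact subset of $\Aut(\Omega)$ meeting $\Gamma$ in a finite set; a further subsequence is constant, $\alpha_n\equiv\alpha_\infty$, and $\alpha_\infty$ fixes $y_\infty$. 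Free action of $\Gamma$ (from torsion-freeness, as $M$ is a manifold) forces $\alpha_\infty=1$, whence $\gamma=1$, contradicting $\gamma\ne 1$.

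For (ii), for any $x\in\Omega$ the orbit $n\mapsto\gamma^n x$ is a quasi-isometric embedding of $\ZZ$ into the geodesic metric space $(\Omega,d_\Omega)$, which by (\ref{stricthyperbolic}) is ML and hence Gromov hyperbolic. Strict convexity identifies the Gromov boundary of $\Omega$ with $\Fr\Omega$. A bi-infinite quasi-geodesic in a Gromov-hyperbolic space has two distinct endpoints at infinity, so $\gamma^n x\to a_\pm\in\Fr\Omega$ as $n\to\pm\infty$. The limits are independent of $x$: since $\gamma$ acts by isometries, any two orbits $\{\gamma^n x\}$ and $\{\gamma^n x'\}$ are uniformly close, hence share endpoints.

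For (iii), lift $\gamma$ to $\SL(V)$ and let $\lambda$ be an eigenvalue of maximum modulus. The projective limit of $\gamma^n\tilde x$, when it exists, lies in the projectivization of the geometric $\lambda$-eigenspace $\ker(\gamma-\lambda I)$ (Jordan-block corrections change the rescaling but not the projective direction). Convergence of $[\gamma^n x]$ to the single point $a_+$ for every $x\in\Omega$ then forces: (a) the geometric $\lambda$-eigenspace is one-dimensional, spanned by a lift $v_+$ of $a_+$ --- otherwise the limit would depend on the $\lambda$-component of $x$; and (b) $\lambda$ is the unique eigenvalue of its modulus --- otherwise $-\lambda$ or a complex pair $\mu,\bar\mu$ of modulus $\lambda$ would produce sign-alternation or rotation in an invariant $2$-plane, preventing convergence. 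Symmetric reasoning for $\gamma^{-1}$ yields a simple smallest-modulus eigenvalue with eigenvector $v_-$ projecting to $a_-$. Let $H_-$ be the projectivization of the sum of all generalized eigenspaces of $\gamma$ other than $\langle v_+\rangle$; this is $\gamma$-invariant, contains $a_-$, and satisfies $H_-\cap\Omega=\emptyset$ (otherwise the orbit of a point in $H_-\cap\Omega$ remains in $H_-$ yet converges to $a_+\notin H_-$). Hence $H_-$ is a supporting hyperplane at $a_-$; define $H_+$ symmetrically. For $x\in\PP V\setminus(H_+\cup H_-)$, the $v_+$- and $v_-$-components of any lift of $x$ are both non-zero, and the spectral decomposition yields $[\gamma^n x]\to a_\pm$ as $n\to\pm\infty$.
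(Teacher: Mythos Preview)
Your route differs substantially from the paper's, and while stages (i)--(ii) are essentially sound, stage (iii) has a genuine gap.

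For contrast: the paper never invokes the Gromov boundary or the spectral decomposition. Compactness of $M$ and Arzel\`a--Ascoli produce a closed geodesic in the free-homotopy class of $\gamma$; its lift is a $\gamma$-invariant segment $(a_-,a_+)\subset\Omega$, so $t(\gamma)>0$ and $a_\pm$ are obtained directly. The paper then uses that $\Omega$ is $C^1$ (Corollary~\ref{dualcpct}) to get \emph{unique} supporting hyperplanes $H_\pm$ at $a_\pm$, which are therefore automatically $\gamma$-invariant, with $a_+\notin H_-$ by strict convexity. Setting $Q=H_+\cap H_-$, the pencil of hyperplanes through $Q$ is a projective line on which $\gamma$ acts nontrivially with exactly the two fixed points $[H_\pm]$; this one-dimensional dynamics, combined with strict convexity and $C^1$, yields the attraction to $a_\pm$ on all of $\PP V\setminus(H_+\cup H_-)$. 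This buys a short, elementary argument with no linear algebra beyond the action on $\RP^1$. Your approach, by comparison, imports Gromov-boundary machinery in (ii) (true, but the identification $\partial_\infty\Omega\cong\Fr\Omega$ and convergence of infinite quasi-geodesics are not established in the paper; the ML statement of Proposition~\ref{stricthyperbolic} is only for quasi-geodesics with common endpoints).

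The real problem is in (iii). Convergence of every $\Omega$-orbit to a single point $a_+$ shows that the top-modulus eigenvalue $\lambda$ is real, positive, and alone in its modulus class, but it does \emph{not} force algebraic simplicity, nor even geometric multiplicity one. If $\gamma$ has a single Jordan block of size $k\ge2$ for $\lambda$, with chain $v_+=w_1,\dots,w_k$, then every $x$ with nonzero generalized $\lambda$-component still satisfies $[\gamma^n x]\to[w_1]=a_+$, so your hypothesis from (ii) is perfectly consistent with $k>1$. In that case ``the sum of all generalized eigenspaces other than $\langle v_+\rangle$'' is not a $\gamma$-invariant hyperplane: there is no $\gamma$-invariant complement to $\langle v_+\rangle$ inside a nontrivial Jordan block, and in fact every $\gamma$-invariant hyperplane contains $v_+$ (the only left $\lambda$-eigenvector is dual to $w_k$, and its kernel contains $w_1=v_+$). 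Your disjointness argument for $H_-\cap\Omega$ then collapses, since it relied on $a_+\notin H_-$. The fix is precisely what the paper does: invoke $C^1$ from Corollary~\ref{dualcpct} to get a unique, hence $\gamma$-invariant, supporting hyperplane $H_-$ at $a_-$; strict convexity then gives $a_+\notin H_-$, and \emph{this} forces algebraic simplicity of $\lambda$ a posteriori --- but at that point the pencil argument is shorter than the spectral one.
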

\begin{proof} Since $M$ is compact, the Arzela-Ascoli Theorem implies  there is a closed
 geodesic $C$ in $M$ that is conjugate to $\gamma$ in $\pi_1M$
and $t(\gamma)=\length(C)>0$. Hence $\gamma$ is hyperbolic. By (\ref{projlemma}) $C$ is covered by a proper segment $\alpha=(a_-,a_+)$ in $\Omega$
that is preserved by $\gamma$.
By (\ref{dualcpct}) $M$ is $C^1$ so there are unique supporting hyperplanes $H_{\pm}$ 
to $\cl\Omega$ that contain $a_{\pm}$ respectively. 
 Then $Q=H_+\cap H_-$
is a codimension--2 subspace, and it is disjoint from $\Omega$ by {\em strict} convexity. Moroever $Q$ is
 preserved by $\gamma$. 

 The pencil of hyperplanes $\{H_t:t\in L\}$ in $\PP V$ that contain $Q$ is dual to a line $L\subset\PP(V^*)$.
Since the dual of $\gamma$ acts on  $L\cong\RP^1$ projectively and non-trivially, it
 only fixes  the two points $[H_{\pm}]\in L$. The other hyperplanes $[H_t]$ are moved by $\gamma$ away from $[H_-]$ and towards $[H_+]$. Since $\Omega$ is strictly convex, $\gamma$ moves all points in $\Omega$  towards $a_+$. 
Suppose $\ell$ is a projective line that contains $a_-$. If $\ell$ is not contained
in $H_-$,  then, since $H_-$ is the unique supporting hyperplane at $a_-$, it follows that $\Omega\cap \ell\ne\emptyset$.
Thus $\gamma^k\ell\to\ell'$ where $\ell'$ is the projective line containing $\alpha$. Hence $\gamma^k(\ell\setminus a_-)\to a_+$
as $k\to\infty$.
  This reasoning applied to $\gamma^{-1}$ gives the corresponding statements for $a_-$, and gives the second conclusion.\end{proof}

The point $a_+$ is called the {\em attracting}, and $a_-$ is the {\em repelling}, fixed point of $\gamma$,
and $(a_-,a_+)\subset \Omega$ is called the {\em axis} of $\gamma$.  This axis is the only proper segment in $\Omega$ preserved by $\gamma$. The attracting fixed point of $\gamma^{-1}$ is the repelling fixed point of $\gamma$. The following result is used in our proof of Chuckrow's theorem, that is needed for the closed theorem.

\begin{corollary}[nilpotent subgroups]\label{nilpotent}  If $M=\Omega/\Gamma$ is a closed, strictly convex, projective manifold,  then 
every nilpotent subgroup of $\Gamma$ is cyclic.\end{corollary}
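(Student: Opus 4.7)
The plan is to show that every element of a nilpotent subgroup $N\le\Gamma$ acts by translation on a common axis in $\Omega$, producing a group monomorphism $N\hookrightarrow(\RR,+)$ whose image is discrete, whence $N$ is cyclic.

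First I would use that nilpotence of $N$ forces $Z(N)\ne 1$; pick any $1\ne z\in Z(N)$. By (\ref{hyperbolic}), $z$ is hyperbolic with attracting and repelling fixed points $a_\pm\in\Fr\Omega$ and axis $\alpha=(a_-,a_+)\subset\Omega$. For each $\beta\in N$, the relation $\beta z\beta^{-1}=z$, combined with the fact that $\beta$ is a projective homeomorphism of $\cl\Omega$, lets me apply the dynamical characterization from (\ref{hyperbolic}): for any $x\in\Omega$, $z^{n}x\to a_+$ as $n\to+\infty$ and $z^{n}x\to a_-$ as $n\to-\infty$. Applying this to $\beta x$ in place of $x$ shows $\beta(a_\pm)=a_\pm$, so $\beta$ preserves the projective line through $a_\pm$; since $\beta$ also preserves $\Omega$, it preserves the proper segment $\alpha$. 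By the uniqueness remark immediately after (\ref{hyperbolic})---that a hyperbolic has a unique preserved proper segment in $\Omega$---this forces $\alpha$ to be the axis of $\beta$ as well whenever $\beta\ne 1$.

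Next, each $\beta\in N$ restricts to an isometry of $(\alpha,d_\Omega|_\alpha)$, which is isometric to the Euclidean line. Since $\Gamma$ is discrete and torsion-free its action on $\Omega$ is free, so no non-trivial $\beta$ fixes a point of $\alpha$; in particular the restriction cannot be a reflection, and $\beta$ must act as a translation. The resulting homomorphism $N\to(\RR,+)$ sending $\beta$ to its signed translation length is injective, since an element of the kernel would fix $\alpha$ pointwise and thus have fixed points in $\Omega$. To conclude that the image is discrete (and therefore cyclic), I would argue that if some sequence $1\ne\beta_n\in N$ had translation lengths $t_n\to 0$, then $\beta_n x\to x$ for any $x\in\alpha$, contradicting the proper discontinuity of the $\Gamma$-action on $\Omega$.

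The main obstacle is the dynamical step: a priori commutation of $\beta$ with $z$ only guarantees that $\beta$ permutes the pair $\{a_+,a_-\}$. Ruling out the possibility of a swap requires using that $a_+$ and $a_-$ are \emph{dynamically} distinguished as the attracting and repelling fixed points of $z$ in $\cl\Omega$, rather than merely being a set-theoretic invariant of $z$.
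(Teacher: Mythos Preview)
Your proof is correct, and the overall architecture matches the paper's: show that a nilpotent subgroup preserves a common axis, then use that a discrete group acting freely on $\RR$ is cyclic. The difference lies in how the common axis is found. You exploit the center: pick $z\in Z(N)\setminus\{1\}$, so every $\beta\in N$ commutes with $z$ outright, and the dynamical description of $a_\pm$ in (\ref{hyperbolic}) forces $\beta(a_\pm)=a_\pm$, hence $\beta$ preserves the axis of $z$. The paper instead fixes an arbitrary nontrivial $\beta$ and runs a descent through iterated commutators: from $[[\alpha,\beta],\beta]=1$ it deduces that $[\alpha,\beta]$ preserves the axis $\ell$ of $\beta$, hence so does $\alpha\beta\alpha^{-1}=[\alpha,\beta]\cdot\beta$, and uniqueness of the axis then gives $\alpha\ell=\ell$; iterating this step down the lower central series handles arbitrary nilpotency class. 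Your use of the center sidesteps that induction and is a bit more direct; the paper's version has the mild advantage of never needing to distinguish $a_+$ from $a_-$.

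On your ``main obstacle'': your own argument already disposes of it. Since $\beta z^n x = z^n(\beta x)\to a_+$ while $z^n x\to a_+$, continuity of $\beta$ on $\cl\Omega$ yields $\beta(a_+)=a_+$ directly, so no swap can occur.
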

\begin{proof} By (\ref{hyperbolic}) every element of $\Gamma$ is hyperbolic, and the axis is the only segment preserved by a non-trivial hyperbolic in $\Gamma$. If $\Gamma'\subset\Gamma$ is nilpotent
then $\Gamma'$ has a non-trivial center. Let $\beta$ be a non-trivial element of this center, and let
$\ell$ be the axis of $\beta$. Since every element of $\Gamma'$ commutes with $\beta$
it follows that $\Gamma'$ preserves the axis of $\ell$.
The action of $\Gamma$ on $\Omega$
is free, so $\Gamma'$ acts freely on $\ell$. A discrete group acting freely by homeomorphisms on $\RR$ is cyclic.
\end{proof}

Two projective structures on the same manifold are {\em equivalent} if there is a projective isomorphism 
of one onto the other
that is {\em homotopic} to the identity (rather than {\em isotopic} to the identity). 
For closed {\em hyperbolic} manifolds of dimension at most $3$, homotopy implies isotopy \cite{MR0214087, MR1895350}.
The next two results imply that the holonomy of a strictly convex manifold determines the domain $\Omega$, and hence determines the projective structure, up
to {\em equivalence}. However, given the holonomy, there may be two developing maps that are homotopic but not isotopic.
Thus the space of equivalence classes of strictly convex projective structures on a closed manifold can be identified with a subset of 
the space of representations modulo conjugacy.

\begin{lemma}\label{noinvt} If $M=\Omega/\Gamma$ is a closed properly convex manifold and  
$\Omega'\subset\Omega$ is a non-empty properly convex subset
that is preserved by $\Gamma$, then $\Omega'=\Omega$.
\end{lemma}
\begin{proof} Otherwise the function $F:\Omega\to \RR$ given by $F(x)=d_{\Omega}(x,\Omega')$ is continuous, unbounded, and $\Gamma$--invariant.
Thus it covers a continuous unbounded function $f:M\to\RR$, contradicting the compactness of $M$.
\end{proof}

\begin{proposition}[Unique domain]\label{unique} Let $n\ge 2.$ Suppose $\Omega\subset\RP^n$ and $M=\Omega/\Gamma$ is a strictly convex, closed $n$--manifold.
If $\Omega'$ is open and properly convex and preserved by $\Gamma$, then $\Omega'=\Omega$.\end{proposition}
\begin{proof} 
 Let $X$ be union, over all $1\ne\gamma\in\Gamma$,  of attracting  fixed points of $\gamma$.
 Then $X\subset\Fr\Omega$ by (\ref{hyperbolic}).
Let $W$ be the  convex hull of $X$ in $\cl\Omega$.
Then $U=W\cap\Omega$ contains the axis of each hyperbolic in $\Gamma$, so $U$ is non-empty, convex, 
$\Gamma$-invariant. Since $U\subset\Omega$, we have $\Omega=U$ by (\ref{noinvt}).
Since $\Omega$ is strictly convex it follows $\Fr\Omega=\cl X$.

Since $\Omega'$ is preserved by $\Gamma$ it follows that $\cl\Omega'\supset \cl X$. Now $\Fr\Omega$ is a
convex hypersurface of dimension $n-1>0$. 
Only one side of $\Fr\Omega$ is locally convex.
Hence $\Omega'$ contains points on the same side of $\Fr\Omega$ as $\Omega$. Thus $U'=\Omega'\cap\Omega\ne\emptyset$ is
properly convex and
 is preserved by $\Gamma$.
 By (\ref{noinvt}) $\Omega=U'=\Omega'$.\end{proof}


\section{Convex Cones}\label{sec:cones}

This section is based on work of Vinberg, as simplified by Goldman. Write $V=\RR^{n+1}$. 
If $\Omega$ is properly convex there is a nice convex hypersurface  in the cone $\Ccal\Omega$
that is preserved by every projective automorphism of $\Ccal\Omega$. This is a generalization
of the hyperboloid model of hyperbolic space that sits inside the lightcone.
In general there is a choice of
such surfaces. One is Vinberg's {\em characteristic surface}, and another is an {\em affine sphere}.
The characteristic surface is defined below using a simple geometric condition. 
One consequence of this is (\ref{centerexists}),
which provides an analog of the centroid for properly convex subsets of the sphere.
The purpose of this section is to prove the following and derive some consequences.

\begin{theorem}\label{SPdef} Suppose $\Ccal$ is a  properly convex cone in $V$.  Let $\Dcal$ be the intersection of all closed affine halfspaces $H\subset V$
such that $\vol(\Ccal\setminus H)=1$. Then
\begin{enumerate}
\item[(1)] $\Dcal\subset\Ccal$.
\item[(2)] $\bdy\Dcal$ is a strictly convex hypersurface.
\item[(3)] $\bdy\Dcal$ meets every ray in $\Ccal$ once. 
\item[(4)] $\bdy\Dcal$ is preserved by $\SL_{\pm}(\Ccal)$.
\item[(5)] If $T$ is a supporting hyperplane to $\Dcal$,  then $T\cap\Dcal$ is the centroid of $T\cap\Ccal$ (see \Cref{Thetapicnew}).
\item[(6)] $\bdy \Dcal$ is $C^1$.
  \end{enumerate}
  \end{theorem}
  In the special case that $\Ccal$ is the cone on a round ball, then $\bdy \Dcal$ is the hyperboloid model of hyperbolic space.
Fix an inner product $\langle\;\cdot\;,\;\cdot\;\rangle$ on $V$. This determines a norm on $V$, and  induces a Riemannian metric
and associated volume form
on every smooth submanifold of $V$. 

 The {\em centroid} of a bounded convex set
$K$ in $V$ is the point $\mu(K)$ in $K$ given by
 $$\mu(K)=\left.\int_K x\ \dvol_{K_x}\right/\int_K \dvol_{K_x}$$
Here $\dim K\le \dim V$ and $\dvol_K$ is the induced volume form on $K$. The centroid is independent of the inner product.

\begin{figure}
                       \centering	 \includegraphics[scale=0.6]{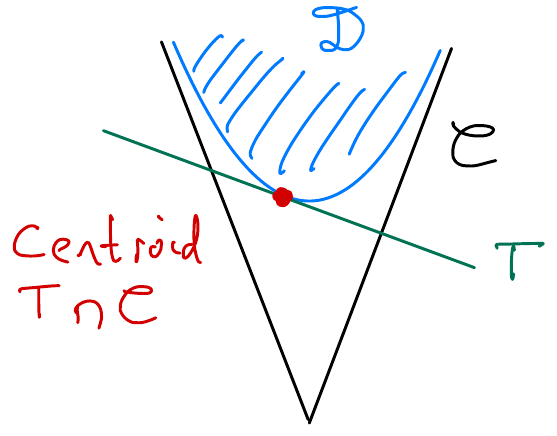}
 \caption{$T\cap \Dcal$ is the centroid of $T\cap\Ccal$} \label{Thetapicnew}
\end{figure}

Let $S^n=\{x\in V:\ \|x\|^2=1\}$.
Suppose $\Omega$ is an open, properly convex subset of $S^n$ and $\Ccal=\Ccal\Omega$ is the corresponding cone in $V$.
Given $0\ne \phi\in V^*$, the set $\phi^{-1}(1)$ is an affine hyperplane in $V$. If $\phi\in\Ccal^*$,  then $\phi(\Ccal)>0$, so
$\Ccal_{\phi}=\{x\in\Ccal: \phi(x)=1\}$ is a bounded subset of this hyperplane that separates $\Ccal$.
The subset of $\Ccal$ below this hyperplane is $\Ccal\cap\phi^{-1}(0,1]$, and has finite volume and  boundary $\Ccal_{\phi}$.
The centroid of $\Ccal_{\phi}$ is a point $\mu(\Ccal_{\phi})\in\Ccal$.
Define $\Theta(\phi)=\mu(\Ccal_{\phi})$. We will show this is a  homeomorphism $\Theta:\Ccal^*\rightarrow\Ccal$. 
 
 The {\em volume function} $\Vcal:\Ccal^*\to\RR$  is defined by
 \begin{align}\label{Vdefa}\Vcal(\phi)=\vol(\Ccal\cap\phi^{-1}(0,1])=\int_{\Ccal\cap\phi^{-1}(0,1]}\dvol\end{align}
  Let 
$\pi:\Ccal_{\phi}\to \Omega=S^n\cap\Ccal$ be the radial projection $\pi(x)=x/\|x\|$. 
Let $\operatorname{dB}$ be the induced  volume form on $S^n$. We  compute this integral in
polar coordinates, so
 $\dvol=r^{n} \operatorname{dr}\wedge\pi^*\operatorname{dB}$.

Given $\phi\in\Ccal^*$, 
 there is $v\in V$ such that $\phi(x)=\langle x,v\rangle$ for all $x\in V$. Let $y=\pi(x)$. If $\phi(x)=1$,  then  \begin{align}\label{reqtn}
r=r(x)=\|x\|=\|x\|/\|\pi x\|=\phi(x)/\phi(\pi x)=\langle y,v\rangle^{-1}\quad{\rm and}\qquad x=\langle y,v\rangle^{-1}y\end{align}
Using polar coordinates
\begin{align}\label{Vformula}\Vcal(\phi)=\int_{C_{\phi}\cap\phi^{-1}(0,1]}r^n\operatorname{dr}\wedge\pi^*\operatorname{dB}=
\int_{\Omega}\left(\int_0^{\langle y,v\rangle^{-1}} r^n \operatorname{dr}\right)\operatorname{dB}_y
=(n+1)^{-1}\int_{\Omega}\langle y,v\rangle^{-n-1}\; \operatorname{dB}_y\end{align}
For $q\in\Ccal$, the set $\Ccal^*_q=\{\phi\in\Ccal^*:\ \phi(q)=1\ \}$ 
is the intersection of a hyperplane in $V^*$ with $\Ccal^*$, and has compact closure. Elements of $\Ccal^*_q$ correspond to hyperplanes 
in $V_0$ that contain $q$ and separate $\Ccal$.

\begin{proposition}\label{Vprop}The volume function has the following properties:
\begin{itemize}
\item[(i)]  $\Vcal$ is smooth and strictly convex.\\
\vspace{-0.2cm}\item[(ii)] $\Vcal(\phi)\to\infty$ as $\phi\to\Fr\Ccal^*$.\\
\vspace{-0.2cm}\item[(iii)] $\Vcal(t\cdot\phi)=t^{-n-1}\Vcal(\phi)$ for all $t>0$.\\
\vspace{-0.2cm}\item[(iv)]
There is a unique $\phi\in \Ccal^*_q$ at which $\Vcal|\Ccal^*_q$ attains a minimum and   $q=\mu(\Ccal_{\phi})$.\end{itemize}
\end{proposition}
\begin{proof} We use the inner product to identify $V$ with $V^*$
so that $\Ccal^*$ is a subset of $V$ and \[\Ccal_q^*=\{x\in \Ccal^*:\langle x,q\rangle=1\}\]
 We also
 regard $\Vcal(\phi)$ as the function $\Wcal(v)$  given by the right hand side of \Cref{Vformula} and prove  corresponding statements
for $\Wcal$. 

For fixed $y$, the integrand $f(v)=\langle y,v\rangle^{-n-1}$ in \Cref{Vformula} is a smooth and convex function of $v$.
It follows that $\Wcal(v)$ is a smooth, {\em strictly} convex function of  $v$. This proves (i).

If $0\ne\psi\in\Fr\Ccal^*$, then there is $0\ne x\in\Fr\Ccal$ with $\psi(x)=0$.
Thus $\RR^+\cdot x\subset\cl\Ccal_{\psi}$, so $\Ccal_{\psi}$ is not compact. Moreover $\Ccal_{\psi}$ is convex
and has non-empty interior, so $\vol(\Ccal_{\psi})=\infty$. It easily follows that $\Vcal(\phi)\to\infty$
as $\phi\to\psi$. This proves (ii), and (iii) follows from \Cref{Vformula}. It follows from convexity and (ii)
that $\Vcal|\Ccal^*_q$ has a unique critical
point, and it is a minimum.

   The gradient of $\Wcal(v)$ is
\begin{align}\label{gradV} \nabla \Wcal =-\int_{\Omega} \langle y,v\rangle^{-n-2} y\;\operatorname{dB}_y
\end{align}
 From the definition of $\Ccal^*_q$ it follows that the condition for a critical point  is that $\nabla \Wcal\in\RR\cdot q$.
 We use radial projection  $\pi$ to change variables in \Cref{gradV}
to obtain  \Cref{newgradV}.

\begin{figure}[h]
                       \centering 	 \includegraphics[scale=0.7]{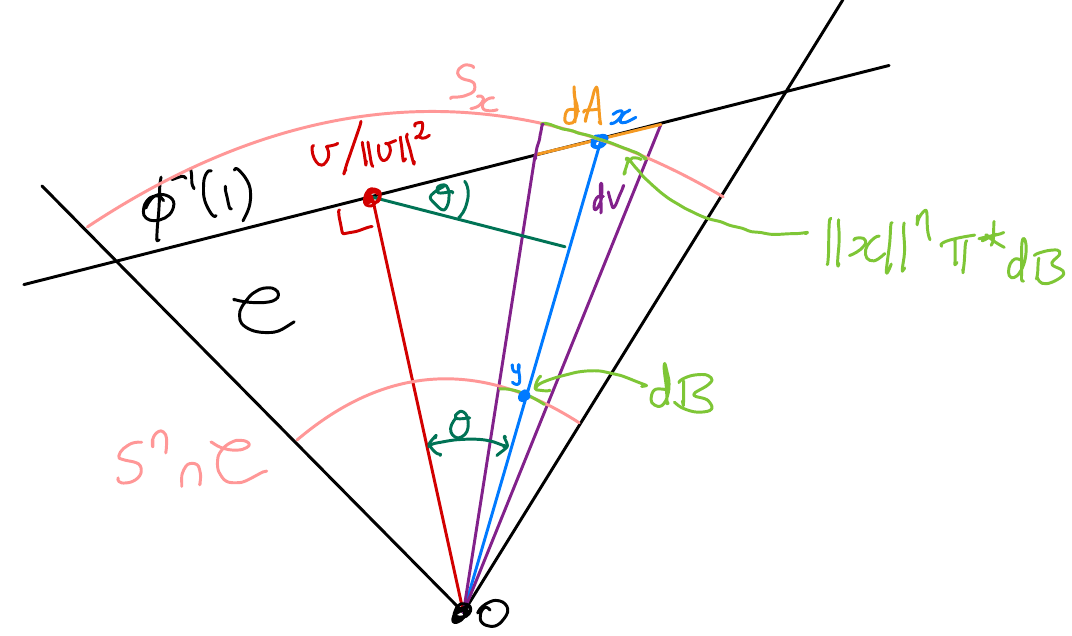}
 \caption{Volume forms on $\Omega\subset S^n$ and $\Ccal_{\phi}$} \label{dAdBfig}
\end{figure}
  Refer to \Cref{dAdBfig}. Let $v\in V$ be dual to $\phi$, so $\phi(x)=\langle x,v\rangle$. Thus $\phi(v\|v\|^{-2})=1$ and
$v\|v\|^{-2}$ is the point on $\phi^{-1}(1)$ closest to $0$. Thus
the distance of $\phi^{-1}(1)$ from $0$ is $\|v\|^{-1}$. Given $x\in \Ccal_{\phi}$,  then $1=\phi(x)=\langle x,v\rangle$. Set $y=\pi(x)$,  then $\|y\|=1$,
and define $\cos\theta=\langle y,v\rangle/\|v\|$. Since $\langle x,v\rangle=1$ it follows that  $x=y/\langle y,v\rangle$ and
 so $\|x\|=1/\langle y,v\rangle$.

  Let $S_x\subset\RR^{n+1}$ denote the sphere with center $0$ and radius $\|x\|$. The volume form on $S_x$ is $d_{S_x}=\|x\|^{n}\pi_1^*dB_y$, where 
  $\pi_1:S_x\rightarrow S^n$ is radial projection. 
 Let 
 $d A_x$ be  the  volume element on $\Ccal_{\phi}$. Then $dA_x=(\cos\theta)^{-1}\pi_2^*d_{S_x}$,
 where $\pi_2:\Ccal_{\phi}\rightarrow S_x$ is radial projection.  Then 
  $\pi=\pi_1\circ\pi_2:\Ccal_{\phi}\rightarrow S^n$ is radial projection, and
  combining gives
\begin{align}\label{dAdB}
d A_x=(\cos\theta)^{-1}\|x\|^{n}\; \pi^*d B_y=  \|v\|\langle y,v\rangle^{-n-1}\; \pi^*d B_y
\end{align}
It follows from \Cref{gradV,dAdB} that
\begin{align}\label{newgradV}\nabla \Wcal =-\|v\|^{-1}\int_{\Ccal_{\phi}}x \; \operatorname{dA}_x=-\|v\|^{-1}\mu(\Ccal_{\phi})\int_{\Ccal_{\phi}}  \operatorname{dA}_x \end{align}
Since the condition for a critical point  is $\nabla \Wcal\in\RR\cdot q$, this happens when
$$\mu(\Ccal_{\phi})\in\RR\cdot q$$
The centroid $\mu(\Ccal_{\phi})$ is in $\Ccal_{\phi}$, and $q\in \Ccal_{\phi}$. Hence at the minimum we have $\mu(\Ccal_{\phi})=q$, proving (iv).
\end{proof}

\begin{proof}[Proof of  \Cref{SPdef}]   Clearly $\Dcal$ is a closed convex subset of $V$. 
Given $q\in \Ccal$, there is $\phi\in\Ccal^*$  given by (\ref{Vprop})(iv). Then by (\ref{Vprop})(iii) there is $t>0$ such that $\Vcal(t\cdot\phi)=1$. Replace $\phi$ by $t\cdot\phi$ and replace $q$ by $t^{-1}q$. Then
$\Vcal(\phi)=1$ and the hyperplane $T_{\phi}=\phi^{-1}(1)$ contains $q$. The halfspace $H={\phi}^{-1}[1,\infty)$ has the property that
$\vol(\Ccal\setminus H)=1$, so $\Dcal\subset H$.

Suppose that $\Vcal(\psi)=1$
and $\psi\ne \phi$,  then $\lambda=\psi(q)>1$
since otherwise if $\psi'=\lambda^{-1}\psi\in\Ccal^*_q$ and $\Vcal(\psi')=\lambda^{n+1}\le 1$. This contradicts that $\phi$ is the unique minimum of $\Vcal|\Ccal^*$.
Hence ${\psi}(q)>1$ so $q\in\bdy\Dcal$ and $T_{\phi}$ is a supporting hyperplane to $\Dcal$ at $q$. Also $q=T_{\phi}\cap\Dcal$ is the centroid of $\Ccal_{\phi}$ by (iv). 
This proves (2) and (5).
It also
follows that $\Dcal\cap(\RR^+\cdot q)=[1,\infty)\cdot q$, which proves (3).

Suppose that $T$ is any supporting hyperplane at $q$. Then there is $\psi\in V^*$ with $T=\psi^{-1}(1)$ and $\psi(\Dcal)\ge 1$. 
The same reasoning shows that $\phi=\psi$, thus $T_{\phi}$ is the unique supporting hyperplane at $q$, which proves (6).
Now (4) follows from the fact that $\SL(\Ccal)$ preserves volume. 

It only remains to prove (1). Refer to \Cref{smallspace}. 
Since $\Dcal$ is convex, it suffices to prove $\Dcal$ contains no point in the frontier of 
$\Ccal$.
\begin{figure}[h]
                       \centering 	 \includegraphics[scale=0.7]{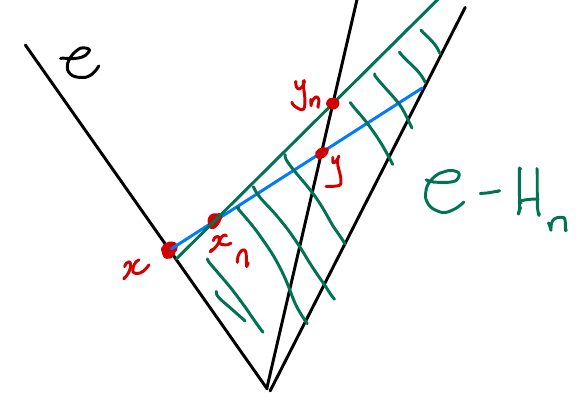}
 \caption{} \label{smallspace}
\end{figure}
Suppose $x\in\Dcal\cap\Fr\Ccal$. Then $x\ne 0$ by definition of $\Dcal$.  Choose $y\in\Ccal$ and let $U\subset V$ be the two dimensional vector subspace that contains both $x$ and $y$.
Let $x_n$ be a sequence in $(x,y)$ that converges to $x$. When $x_n$ is close to $x$, there is a halfspace $H'_n$ with $x_n\in\bdy H'_n$ and
$\vol(\Ccal\setminus H'_n)$ is very small. Thus if $\phi_n\in\Ccal^*_{x_n}$ is as given by (iv),  then
$\epsilon_n=\Vcal(\phi_n)\to0$ as $n\to\infty$. Since $x$ is above $\bdy H_n=\phi_n^{-1}(1)$ it follows that $y$ is below $y_n=(\RR\cdot y)\cap \bdy H_n$.

 Let $\psi_n=\epsilon_n^{1/(n+1)}\phi_n$,  then by (\ref{Vprop})(iii), we have $\Vcal(\psi_n)=1$. Then $\Dcal\subset\psi_n^{-1}[1,\infty)$.
The hyperplane $T_n=\psi_n^{-1}(1)$
intersects $\RR\cdot y$ at $z_n=\epsilon_n^{-1/(n+1)}y_n$. Since $\|y_n\|>\|y\|$ it follows that $z_n\to\infty$ as $n\to\infty$. Let $r_n$ be the distance of $z_n$ from $\bdy\Ccal$. Then $r_n\to\infty$. Half
the ball of radius $r_n$ center at $z_n$ is below $T_n$ and in $\Ccal$.  But this volume is at most $1$. This contradicts the existence of $x$, proving (1).
\end{proof}

\begin{corollary}\label{Thetaprop} If $\Omega$ is properly convex, then $\Theta:\Ccal\Omega^*\rightarrow\Ccal\Omega$ is a homeomorphism that maps  rays to rays, and $[\Theta]:\Omega^*\rightarrow\Omega$ is a homeomorphism.\end{corollary}
\begin{proof} 
Given $p\in\Omega$ there is a unique $x\in\bdy\Dcal$ 
with $p=[x]$. By (\ref{Vprop})(iv) there is a unique $\phi\in\Ccal^*$ with $\phi(x)=1$
and $\Vcal(\phi)=1$. Moroever $\mu(\Ccal_{\phi})=x$ and so $[\Theta][\phi]=p$.
Hence $[\Theta]$ is a bijection.
 Clearly $\Theta$ is continuous. By (\ref{Vprop})(ii)   $[\Theta]$ is proper, and thus a homeomorphism.
  Since $\Theta(t x)=t^{-n-1}\Theta(x)$, rays are mapped to rays. It follows that $\Theta$ is also a homeomorphism.
 \end{proof}
The dual action of $A\in\SL _{\pm}V$ on $\PP V^*$
 is given by $A[\phi]=[\phi\circ A^{-1}]$. If $\Gamma\subset\SL_{\pm} V$ and preserves $\Omega$, then the dual action 
 of $\Gamma$ preserves $\Omega^*$. It is clear that $\Theta$ is equivariant with respect to these actions.
 It directly follows from (\ref{C1domain}) and (\ref{Thetaprop}) that:

  \begin{corollary}[Vinberg]\label{dualmfd} If $M=\Omega/\Gamma$ is a closed, properly convex manifold, then $M^*=\Omega^*/\Gamma$
 is a properly convex manifold that is homeomorphic to $M$. Thus $\pi_1M\cong\pi_1M^*$. 
  \end{corollary}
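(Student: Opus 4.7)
The plan is to transport the group action from $\Omega$ to $\Omega^*$ via the homeomorphism $\Theta$ of (\ref{Thetaprop}), and then descend $\Theta$ to the quotients.

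First I would verify that $\Gamma$ acts freely and properly discontinuously on $\Omega^*$. Recall the dual action is $A[\phi]=[\phi\circ A^{-1}]$, and the remark after (\ref{Thetaprop}) notes that $\Theta:\Omega^*\to\Omega$ is $\Gamma$-equivariant with respect to this action. Since $\Theta$ is a homeomorphism (hence in particular proper) and $\Gamma$ acts freely and properly discontinuously on $\Omega$ (being the deck group of the cover $\Omega\to M$), the equivariance transports these properties to the action of $\Gamma$ on $\Omega^*$. In particular, $\Omega^*/\Gamma$ inherits the structure of a Hausdorff manifold, and since $\Omega^*$ is properly convex, $M^*=\Omega^*/\Gamma$ is a properly-convex projective manifold by definition.

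Next, since $\Theta$ is a $\Gamma$-equivariant homeomorphism, it descends to a continuous bijection $\overline\Theta:M^*\to M$ of the quotients. The inverse $\Theta^{-1}$ is likewise $\Gamma$-equivariant and continuous, so it descends to a continuous inverse of $\overline\Theta$. Hence $\overline\Theta$ is a homeomorphism, giving $M^*\cong M$. The fundamental group statement is then immediate: homeomorphic spaces have isomorphic fundamental groups, so $\pi_1 M\cong \pi_1 M^*$.

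There is no real obstacle here; the work was already done in (\ref{Thetaprop}) and in the observation of equivariance. The only thing to be a little careful about is that ``the dual action preserves $\Omega^*$'', which is true because if $A\in\Gamma$ preserves $\cl\Omega$ then for every $\phi$ with $\phi(\cl\Omega\setminus\{0\})>0$ one has $(\phi\circ A^{-1})(\cl\Omega\setminus\{0\})=\phi(A^{-1}\cl\Omega\setminus\{0\})=\phi(\cl\Omega\setminus\{0\})>0$, so indeed $A\cdot\Ccal^*=\Ccal^*$.
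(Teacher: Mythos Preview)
Your proof is correct and follows the same approach as the paper. The paper's proof is the single line ``Follows from (\ref{Thetaprop})'', leaving implicit exactly the equivariance and descent argument you have spelled out; your version simply fills in those details.
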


If $M=\Omega/\Gamma$, then $M$ is called {\em $C^1$} if  $\Omega$ is $C^1$. It follows from
 (\ref{C1domain}) that:
\begin{corollary}\label{C1}  $M$ is strictly convex if and only if $M^*$ is $C^1$.\end{corollary}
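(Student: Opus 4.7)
The plan is to simply unwind the definitions and invoke Corollary~\ref{C1domain}. By definition, $M = \Omega/\Gamma$ is strictly-convex precisely when the universal cover $\Omega \subset \RP^n$ is a strictly-convex domain. Likewise, by the definition just introduced, $M^*$ is $C^1$ precisely when its universal cover $\Omega^* \subset \RP^{n*}$ is a $C^1$ domain (i.e. open, properly-convex, and admitting a unique supporting hyperplane at every boundary point).

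So the first step is to note that the hypotheses already tell us $\Omega$ and $\Omega^*$ are both open and properly-convex: the former because $M$ is assumed properly-convex, and the latter from the general fact that the projective dual of a properly-convex domain is properly-convex (this was used throughout Section~\ref{sec:cones} and is needed for $\Theta$ to even be defined). The second step is simply to apply Corollary~\ref{C1domain}, which gives the equivalence $\Omega$ strictly-convex $\Leftrightarrow$ $\Omega^*$ is $C^1$ for any open properly-convex $\Omega$. Chaining these equivalences yields the statement.

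There is no real obstacle here: the entire content of the corollary sits in the domain-level statement (\ref{C1domain}), whose proof already appeared as a short duality argument involving the pencil of hyperplanes with core dual to a segment in $\Fr\Omega^*$. The only thing to check is that passage to the quotient by $\Gamma$ does not disturb the equivalence, which is immediate because both "strictly-convex" and "$C^1$" for the manifolds are defined by the corresponding property of the universal cover, and $\Gamma$ plays no role in the implication. Thus the proof is a one-line invocation of (\ref{C1domain}).
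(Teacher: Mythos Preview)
Your proposal is correct and matches the paper's approach exactly: the paper simply states that the corollary ``follows from (\ref{C1domain})'' without giving a separate proof, which is precisely the one-line reduction you describe.
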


  \begin{corollary}\label{dualcpct} If $M=\Omega/\Gamma$ is a closed, strictly convex manifold, then $M^*=\Omega^*/\Gamma$
 is a closed, strictly convex manifold. We have $\pi_1M\cong\pi_1M^*$ and call  $M^*$ the {\em dual } of $M$.
 Moreover, $M$ is $C^1$.
  \end{corollary}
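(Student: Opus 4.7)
The plan is to assemble the three prior results (\ref{dualmfd}), (\ref{stricthomeo}), and (\ref{C1}) in sequence; no new estimates are needed.

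First, I would invoke Vinberg's corollary (\ref{dualmfd}): because $M=\Omega/\Gamma$ is closed and properly-convex, the dual $M^*=\Omega^*/\Gamma$ is a properly-convex manifold that is homeomorphic to $M$ (via the map induced by $\Theta:\Omega^*\to\Omega$). This instantly gives two things for free: $M^*$ is closed (homeomorphism preserves compactness and boundarylessness), and $\pi_1M\cong\pi_1M^*$.

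Next, I want to upgrade ``properly-convex'' to ``strictly-convex'' for $M^*$. Both $M$ and $M^*$ are closed and properly-convex, their fundamental groups are isomorphic, and $M$ is strictly-convex by hypothesis. So Corollary (\ref{stricthomeo}) applies and yields that $M^*$ is strictly-convex. This is the key step, and it is also the only nontrivial one; everything else is bookkeeping. The reason this is the real content is that strict convexity of the domain is not a priori preserved under the duality construction on cones — it needs the QI/Morse-lemma characterization (\ref{stricthyperbolic}) together with \v Svarc--Milnor (\ref{QIhomeo}), which is what drives (\ref{stricthomeo}).

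Finally, for the $C^1$ claim, I would feed $M^*$ into Corollary (\ref{C1}), which asserts that a closed properly-convex manifold is strictly-convex if and only if its dual is $C^1$. Applied with $M^*$ in the role of ``$M$'' and using the canonical identification $(\Omega^*)^*=\Omega$ noted right after (\ref{C1domain}), strict convexity of $M^*$ (just established) is equivalent to $C^1$-ness of $(M^*)^*=M$. Hence $M$ is $C^1$, completing the proof. The whole argument is essentially a two-line deduction; the only subtlety to be careful about in the write-up is explicitly invoking $(\Omega^*)^*=\Omega$ so that the application of (\ref{C1}) to $M^*$ lands on $M$ rather than on some formally distinct bidual.
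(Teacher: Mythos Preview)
Your proposal is correct and follows essentially the same route as the paper: invoke (\ref{dualmfd}) for proper convexity and the homeomorphism (hence closedness and $\pi_1$), then (\ref{stricthomeo}) for strict convexity of $M^*$, then duality to get $M$ is $C^1$. The only cosmetic difference is that the paper cites (\ref{C1domain}) directly rather than its manifold-level restatement (\ref{C1}), and you are a bit more explicit about where closedness of $M^*$ comes from.
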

   \begin{proof}   Since $M$ is strictly convex, it is properly convex, so $M^*$ is a properly convex manifold,
   and $\pi_1M\cong\pi_1M^*$. By (\ref{htpyequiv}) and  (\ref{stricthomeo}) $M^*$ is closed, and 
 strictly convex. Thus $M=(M^*)^*$ is $C^1$ by (\ref{C1domain}).
    \end{proof}

The centroid of a bounded open convex set $\Omega\subset {\mathbb R}^n$ is a distinguished point in $\Omega$.
We wish to define something similar for  certain subsets of the sphere $S^n=\{x\in\RR^{n+1}:\|x\|=1\}$. Imagine a room that contains a transparent globe close to one wall, with a light source at the center of the globe. The shadow of Belgium appears on the wall.
You can  rotate the globe so that the centroid of this shadow is  the point $p$ on the wall closest to the center of the globe. 
The point on the globe that projects to $p$ is called the
{\em (spherical) center} of Belgium.

The open hemisphere that is
the $\pi/2$ neighborhood of $y\in S^n$ is $U_y=\{x\in S^n: \langle x,y\rangle>0\}$.
Radial projection $\pi_y:U_y\longrightarrow {\operatorname{T}_y}S^n$ 
from the origin onto the tangent space
to $\SS^n$ at $y$ is given by $$\pi_y(x)=\frac{x-\langle x,y\rangle y}{\langle x,y\rangle}$$
where we identify $S^n$ with $\SS^n$.

\begin{definition} If $\Omega\subset \SS^n$ is properly convex and $\Omega^*=\{y\in S^n:\ \langle x,y\rangle>0\ {\rm \ for \ all \ } x\in\cl(\Omega)\}$, then $y\in \Omega^*$ is called
a {\em (spherical) center of $\Omega$}  if 
 $\cm(\pi_y(\Omega))=\pi_y(y)$.
 \end{definition}

If $A\in\O(n+1),$ then $A$ is an isometry of the inner product and so $(A\Omega)^*=A(\Omega^*)$, and  $Ay$ is a center of $A\Omega$
if  $y$ is a center of $\Omega$. The following property of $\bdy\Dcal$ is important for the proof of the Closed Theorem.
\begin{theorem}\label{centerexists} If $\Omega\subset S^n$ is properly convex, then  it has a unique spherical center $[x]$. 
Moreover
 $x$ is the point on $\bdy\Dcal$ that minimizes $\|x\|$.
\end{theorem}
\begin{proof} Let $\phi\in V^*$ be given by $\phi(y)=\langle y,x\rangle$.
The tangent plane to $\bdy\Dcal$ at $x$ is $\phi^{-1}(1)$ and is orthogonal to $x$. 
Thus $\pi_x(\Omega)=\Ccal_{\phi}$ and $\mu(\Ccal_{\phi})=x$.
\end{proof}
 
 \begin{corollary} If $\Omega\subset\RP^n$ is properly convex and $p\in \Omega$, then
 there is an affine patch $\RR^n\subset\RP^n$ such that $p$ is the centroid of $\Omega$ in $\RR^n$.
\end{corollary}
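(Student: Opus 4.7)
The plan is to invoke Proposition \ref{Vprop}(iv) directly. The key point is that this proposition already produces, for every point in the cone, a ``centroid-realizing'' affine hyperplane; we just need to translate from the cone picture to the projective picture.

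First, I would lift $\Omega\subset\RP^n$ to a properly-convex open subset in $\PP_+V$, so that it is of the form $\PP\Ccal$ for a properly-convex open cone $\Ccal\subset V$. Pick any lift $q\in\Ccal$ of $p$. Applying (\ref{Vprop})(iv) with this $q$ produces a unique $\phi_0\in\Ccal^*$ lying in the affine slice $\Ccal^*_q=\{\phi\in\Ccal^*:\phi(q)=1\}$ that minimizes $\Vcal|\Ccal^*_q$, and moreover
$q=\mu(H_{\phi_0})$,
where $H_{\phi_0}=\Ccal\cap\phi_0^{-1}(1)$ is the flat open disc in the affine hyperplane $\phi_0^{-1}(1)\subset V$.

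Now take $H:=[\ker\phi_0]\subset\RP^n$. Because $\phi_0\in\Ccal^*$ is strictly positive on $\cl\Ccal\setminus\{0\}$, the hyperplane $H$ is disjoint from $\cl\Omega$, so $\RR^n:=\RP^n\setminus H$ is an affine patch containing $\cl\Omega$. Identify this affine patch with the affine hyperplane $\phi_0^{-1}(1)\subset V$ via the map $[v]\mapsto v/\phi_0(v)$; this is a well-defined affine isomorphism. Under this identification $\Omega$ corresponds to the bounded open convex set $H_{\phi_0}$, while $p=[q]$ corresponds to $q/\phi_0(q)=q$ (using $\phi_0(q)=1$). Since $q=\mu(H_{\phi_0})$, the point $p$ is indeed the centroid of $\Omega$ in this affine patch.

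There is no real obstacle: the content of the statement is exactly repackaged from (\ref{Vprop})(iv); the only thing to verify is the elementary (and routine) fact that the affine chart on $\RP^n\setminus[\ker\phi_0]$ given by $[v]\mapsto v/\phi_0(v)$ identifies the projective set $\Omega$ with $H_{\phi_0}$ and the projective point $p=[q]$ with the vector $q$. Once that identification is in place, the conclusion is immediate.
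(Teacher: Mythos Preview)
Your proof is correct and rests on the same mechanism as the paper's, namely Proposition~\ref{Vprop}(iv): the paper reaches the same $\phi$ by first picking $x\in\Scal$ with $[x]=p$ and then choosing an inner product so that $x$ is the closest point of $\Scal$ to $0$ (which forces $x^\perp=\ker\phi$), whereas you invoke \ref{Vprop}(iv) directly and bypass the auxiliary hypersurface $\Scal$ and the inner-product choice. The two arguments are essentially the same, with yours being slightly more direct.
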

\begin{proof} There is a unique $x\in\bdy\Dcal$ with $p=[x]$.
Choose an inner product on $V$ so that $x$ is the closest point to $0$ on $\bdy\Dcal$.
The required affine patch is $\RP^n\setminus\PP(x^{\perp})$.
\end{proof}

 \section{Open}\label{opensec}

If $M$ is a closed $(n-1)$-manifold,  then $\pi_1M$  is finitely generated by some set of elements $g_1,\cdots,g_k$. Now $\rho\in \Rep(M)$
is uniquely determined by the $k$ elements $\rho(g_i)\in\PGL(n,\RR)$. 
The Veronese embedding embeds real projective space  (and thus $\PGL(n,\RR)$) into a 
 Euclidean space.
This collection of elements can then be thought of as one point in a finite dimensional 
real vector space $V$.
This gives an injective map of $\Rep(M)$ into $V$. The {\em Euclidean topology} on $\Rep(M)$
is the subspace topology (of the standard topology on $V$)  on the image of this map.  In this topology
$\rho$ and $\rho'$ are close if there are matrix representatives of  $\rho(g_i)$ and $\rho'(g_i)$ that are close.

\begin{theorem}[Ehresmann-Thurston principle \cite{MR2110758}]\label{ET} Suppose that $M$ is a closed manifold of dimension $n$. Then the subset of
$\Rep(M)$ consisting of the holonomies of real projective structures on $M$ is open in the Euclidean topology.\end{theorem}
\begin{proof}  Let $\widetilde{M}$ be the universal cover of $M$.
We identify $\pi_1M$ with the group of covering transformations of $\widetilde M$.
Choose a triangulation $\Tcal_M$ of $M$. 
Let $\Tcal$ be the lifted triangulation on $\widetilde M$. 
 
Suppose that $\rho\in\Rep(M)$
and  there is a collection of maps
$$f_{\sigma}:\sigma\rightarrow\RP^n$$ one 
for each simplex $\sigma$ in $\Tcal$, satisfying the following properties.
\begin{enumerate}
\item {\em Equivariance.}
If $\sigma'=g\cdot\sigma$ for some $g\in\pi_1M$,  then 
$$f_{\sigma'}=\rho(g)\circ f_{\sigma}$$
\item {\em Compatibility.} If $\tau$ is a face of $\sigma$,  then
$$f_{\tau}=f_{\sigma}|\tau$$
\end{enumerate}
Compatibility implies there is a  map $F:\widetilde{M}\longrightarrow \RP^n$ such that $F|\sigma=f_{\sigma}$. Equivariance implies
$$F(g\cdot x)=\rho(g)(x)$$
Suppose that  $\dev_0:\widetilde{M}\rightarrow\RP^n$ is the developing map, 
and $\rho_0\in\Rep(M)$ is the holonomy of some real projective structure on $M$. If $\rho=\rho_0$, 
 then the collection $f_{\sigma}=\dev_0|\sigma$ satisfies the required properties and $F=\dev_0$.

Choose the triangulation
$\Tcal_M$ of $M$ to be by projective simplices.
We  show below that if $\rho$ is close enough to $\rho_0$,  then the maps $f_{\sigma}$ may be chosen to be close to $\dev_0|\sigma$
on any finite set of simplices. If this finite collection contains a neighborhood, $U$, of a fundamental domain $D$, this will ensure $F|U$ is a local homeomorphism.
By equivariance $F$ is a local homeomorphism everywhere. Then $F$ is a developing map for $\rho$, and this defines a projective structure on $M$
with holonomy $\rho$,

It suffices to define $f_{\sigma}$ on one simplex in each $\pi_1M$ orbit. Then the equivariance  property defines the maps on the rest of the orbit. In order to
ensure compatibility, we define the maps in order of increasing dimension of simplices. At each stage we have a well defined equivariant map $F_k:\Tcal^k\rightarrow\RP^n$
 defined on
the $k$-skeleton $\Tcal^k$ of $\Tcal$. 
The extension process is to choose one $(k+1)$-simplex $\sigma$ in each $\pi_1M$ orbit, and define $f_{\sigma}$ to be a homeomorphism
onto a projective simplex that has boundary $F_k(\bdy\sigma)$. By choosing $\rho$ close enough to $\rho_0$
we can arrange that the projective simplex $f_{\sigma}(\sigma)$ is   non-degenerate and  close to $\dev_0(\sigma)$.
The induction starts by choosing one vertex $v$ in each $\pi_1M$ orbit
and defining $f_v(v)=\dev_0(v)$. 
   \end{proof}

The next goal is to characterize properly convex manifolds in such a way that the above proof applies.
A set is {\em locally convex} if every point
has a convex neighborhood. There is a basic local-to-global principle for convexity. 
\begin{proposition}\label{convexhyper} If $K$ is a closed, connected, locally convex subset of $\RR^n$ that is the closure of its interior, then $K$ is convex.
\end{proposition}
\begin{proof} Suppose that  $[x,y]$ is a segment in  $K$. Then $[x,y]\cap\bdy K$ is closed.  If $(x,y)$ contains a point in $\bdy K$, then  by local
convexity $[x,y]\cap\bdy K$ is open in $[x,y]$. Thus $[x,y]\subset\bdy K$.
 It follows that  the union of the segments in $K$ with one endpoint $x$ is  open in $K$. Since $K$ is closed, and the limit of segments is a segment, this subset is also closed. Since $K$ is connected this set equals $K$.\end{proof}

 Local convexity only needs to be checked at points in $\bdy K$. Suppose that $K$ is contained in the upper halfspace $x_1\ge0$ in $\RR^{n+1}$. Then $K\cap(0\times\RR^{n})$ 
is a convex subset of $\RR^n$ if the subset $S\subset \bdy K$ 
where $x_1>0$ is a locally convex hypersurface in $\RR^{n+1}$. Informally: a mountain with a convex surface has a convex base.
We now extend this to show that a manifold is properly convex.

A projective manifold is {\em convex} if every pair of points is contained in a segment.
If $M$ is a closed projective manifold, the argument in (\ref{convexhyper}), that the union of the segments starting at a point is closed, fails. For example, consider the projective 
torus $T$ that is the quotient of $\RR^2_0$ by a cyclic group generated by a homothety. Then $T$ is not convex. Let $\pi:\RR_0^2\rightarrow T$ be the projection.
 If $x\in\RR^2_0$ and $y_n\in\RR^2_0$ is a sequence that converge to $-x$
then the Hausdorff limit of the segments $\pi[x,y_n]$ in $T$ is not connected. It is the pair of rays, $\pi[x,0)\cup \pi(0,-x]$. Each ray maps into in  one of
the pair  of circles
$\pi(\RR_0\cdot x)\subset T$.

\halfgap A smooth hypersurface $S\subset\RR^n$ is {\em Hessian-convex} if 
the surface is locally the zero-set of a smooth, real-valued function with positive-definite Hessian. Suppose $\Omega\subset\RPn_+$ is properly convex and $\Ccal\Omega=\{v\in \RR^{n+1}_0:[v]\in\Omega\}$ is the corresponding convex cone. 
Suppose $M=\Omega/\Gamma$ is a compact, and properly convex, $n$--manifold.
Then $\widetilde W=\Ccal\Omega/\Gamma\cong M\times (0,\infty)$ 
is a properly convex affine $(n+1)$--manifold. We may quotient out by a homothety to obtain a compact affine $(n+1)$--manifold $W\cong M\times S^1$.

By (\ref{SPdef}) there is a  hypersurface $\bdy\Dcal\subset \Ccal\Omega$ that is $\Gamma$--invariant and Hessian-convex away from $0$.
Then $Q=\bdy \Dcal/\Gamma$ is a compact, Hessian-convex, codimension--1 submanifold of $W$. 
Let $K\subset\RP^{n+1}$ be
the closure of $\Dcal$ and regard $\RP^{n+1}=\RR^{n+1}\sqcup\RP^n_{\infty}$. The interior of $K\cap\RP^n_{\infty}$ can be identified with $\Omega$.
The existence of the hypersurface $Q$ in $W$ {\em implies} the existence of $\Dcal\subset\RR^{n+1}$, which {\em implies} $\Omega$ is properly convex,  by the reasoning above.

By the Ehresmann-Thurston principle, deforming $\Gamma$ a small amount to $\Gamma'$
 gives a new projective $n$--manifold $M'\cong M$, and a new affine $(n+1)$--manifold $W'\cong M\times S^1$. 
 Now $W'$ contains  a hypersurface $Q'$ that is Hessian-convex, provided
the deformation of the developing map is small enough in $C^2$. It then follows
that  $M'$ is properly convex. This is the approach taken in \cite{CLT2} for non-compact manifolds.

Here, we choose to
 work with a piecewise linear submanifold $Q$ in place of a smooth one.  Hessian convexity 
 is replaced by the condition
 that at each vertex of the triangulation the determinants, of certain matrices formed by the relative positions of vertices, are strictly positive.
This ensures local convexity near the vertex. This version of convexity is preserved by small $C^0$--deformations
of the developing map. Thus it is easier to apply.
We start by reviewing these ideas for hyperbolic manifolds.

\medskip

The hyperboloid model of the hyperbolic plane  is the action of $\O(2,1)$ on
  the surface $S\subset\RR^3$  given by $x^2+y^2-z^2=-1$. If we identify $\RR^3$ with $\RP^3\setminus\RP^2_{\infty}$,
 then we can regard this as an affine action on 
  $\RP^3$ by
using the affine subgroup $\O(2,1)\oplus(1)\subset\SL(4,\RR)$.
The open disc  $$D=\{[x:y:z:0]:\ x^2+y^2<z^2\}\subset\RP^2_{\infty}$$ has  the same frontier as $S$,
   and $(\O(2,1),D)$ is the projective (Klein) model  of the hyperbolic plane. See \Cref{SandOmega}.
 
\begin{figure}
                       \centering\includegraphics[scale=0.7]{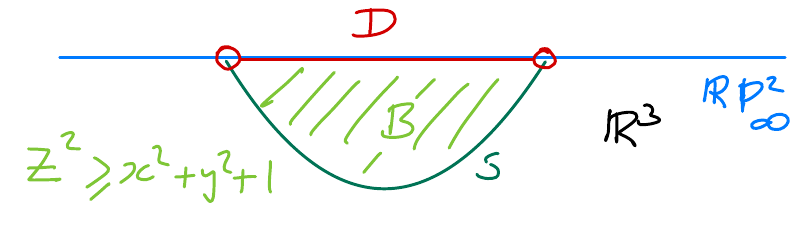} 
 \caption{The Hyperboloid and Klein models of $\HH^2$} \label{SandOmega}
\end{figure}

 The surface $\cl(S\cup D)\subset\RP^3$ bounds a closed $3$-ball $B\subset\RP^3$. 
 Let  $\Omega=B\setminus\Fr D\cong D\times I$.
 The fact that $S$ is
 a strictly convex surface {\em implies} $\cl(S\cup D)$
 is a convex surface in some affine patch which {\em implies} $\Omega$ is properly convex, and this {\em implies} $D$ is properly convex.
 If one thinks of $B$ as an upside down mountain, then the convexity of the surface $S$ implies the convexity of $D$.

  The action of $\O(2,1)\oplus(1)$ preserves $\Omega$.
    Suppose $\Gamma\subset\O(2,1)$ and $\Sigma=D/\Gamma$ is a  hyperbolic surface. Let $\Gamma'=\Gamma\oplus(1)$,  then $N=\Omega/\Gamma'\cong\Sigma\times[0,1]$
is a properly convex 3--manifold with one flat boundary component $\Sigma=D/\Gamma$ and one
strictly convex boundary component $M=S/\Gamma$. The fact that $M$ is a strictly convex surface {\em implies} $\Sigma$ is properly convex.
We will generalize this construction to arbitrary properly convex manifolds in place of $\Sigma$. But first we 
divide out by a homothety.

Consider the cone $\Ccal=\{\lambda\cdot x:x\in S,\ \ \lambda>0\}$. There is a product structure $\widetilde \phi:S\times\R^+ \to\Ccal$ given by $\widetilde\phi(x,\lambda)=\lambda\cdot x$
 on $\Ccal$ that is preserved by $\Gamma'$. Let $H\subset\GL(4,\RR)$ be the cyclic
group generated by $\Diag(2,2,2,1)$. Then $\Gamma'$ centralizes $H$ and the group $\Gamma^+\subset\GL(4,\RR)$ generated by $\Gamma'$ and $H$
preserves $\Ccal$ and $W:=\Ccal/\Gamma^+$ is a closed $3$--manifold homeomorphic to 
$\Sigma\times S^1$.

In what follows $\RR^+=\{x\in\RR:x>0\}$, and $S^1=\RR^+/\exp(\ZZ)$, has universal cover $\RR^+$.
There is a product structure $\phi:\Sigma\times S^1\to W$
covered by $\widetilde \phi$, and the surfaces $\phi(\Sigma,\theta)$ are convex.
The reader might contemplate all this in the case of one dimension lower, where $\Gamma\subset\SO(1,1)$
is generated by a hyperbolic and $\Ccal/\Gamma^+$ is an affine structure on $S^1\times S^1$. 

\medskip
We now return to the general setting.  
\if0 This is the {\em tautological
 line bundle} over positive projective space. If $M$ is a projective manifold, by (\ref{holonomylifts})
  the structure lifts
 to a positive projective structure. The pull-back of this bundle using $\widetilde{\dev}:\widetilde{M}\rightarrow S^{n-1}$ followed by the quotient by covering transformations is the {\em tautological line bundle} over $M$.
 \fi
 The {\em radiant-affine group} is  the subgroup of the affine group acting on $\RR^n$ that fixes the origin,
$$\FG(n)=\bpmat\GL(n,\RR) & 0\\ 0 &1\epmat\subset \Aff(\RR^n)$$  {\em Radiant-affine geometry}  is 
$(\FG(n),\RR^n_0)$ and is a subgeometry of affine geometry,  so we may use affine notions. It is  also isomorphic to a subgeometry  of projective geometry under the embedding \[\RR^n_0\hookrightarrow\{[x:1]:   x\in\RR^n_0\}\subset\RP^n\]
The subgroup $t \Id_n\oplus 1$ with $t>0$ is called the {\em homothety flow}. 
This subgroup acts  on  $\RR^n_0$ by homotheties:  $x\mapsto t\cdot x$. 
The  quotient  of the tautological line bundle  by a cyclic group of homotheties
 is the  {\em tautological circle bundle} over $M$. 
\begin{definition}\label{RAbundle} If $M$ is a closed $(n-1)$--manifold, then a {\em tautological circle bundle} structure on $M\times S^1$
is a radiant-affine geometry structure $(\dev,\rho)$ such that:
\begin{enumerate}
\item the generator of $\pi_1(*\times S^1)$ acts on $\RR^n_0$ by 
$x\mapsto 2 x$
\item $\det(\rho\pi_1(M\times 1))\subset\{\pm1\}$
\end{enumerate}
\end{definition}

 Let $\pi:\RR^n_0\rightarrow S^{n-1}$ be radial projection $\pi(x)=x/\|x\|$.  
The second condition implies that \[(\pi\circ\dev|(\widetilde{M}\times 1),\rho|\pi_1(M\times 1))\] is a positive
projective structure on $M$. Conversely a positive projective structure on $M$ determines a
radiant-affine geometry structure on $M\times S^1$.  Multiplication by $S^1$ on the right factor of $W=M\times S^1$ corresponds by the developing map to homotheties. We call  all of these actions {\em homothety} or the {\em radial flow}.

Suppose $W$ is an affine $n$--manifold, and $S$ is a hypersurface in $W$. Then
$S$
 is a {\em convex hypersurface},  and it is {\em locally convex}, if for every point $p\in S$
 there is an $n$--dimensional submanifold $Q\subset W$ with $Q\cong\bdy Q\times[0,1)$ and $\bdy Q\subset S$, and $\bdy Q$ is a neighborhood
 of $p$ in $S$ and $\dev(U)\subset\RR^n$ is convex.
If $W=M\times S^1$ is a tautological circle bundle, then such $S$ is   {\em outwards} convex
if, in addition, $t\cdot \bdy Q\cap Q\ne\emptyset$ for some $t>1$. In other words $\dev\widetilde S$ is locally-convex {\em away} from $0$ in $\RR^{n+1}$.

A locally convex hypersurface $S$ is {\em strictly convex} if every connected flat subset  of $S$ is a single point.
If, instead, every connected flat subset of $\widetilde S$ is contained in a compact set,
then $S$ is called {\em strongly-convex}.  We will make use  of strongly convex when $S$ has a triangulation by  flat simplices,
no two of which are in the same hyperplane. In this case $S$ is called {\em simplicial convex}.

\halfgap

\begin{definition} A tautological circle bundle structure on $M\times S^1$ is {\em radial convex}
if $M\times\theta$ is an  outwards strongly-convex hypersurface in $M\times S^1$ for some (and hence all) $\theta\in S^1$.
\end{definition}

 \begin{theorem}\label{sufficient} If $M\times S^1$ is compact and radial convex, then the associated projective structure on $M$ is properly convex.
 \end{theorem}
 \begin{proof} Let $n=\dim M$. In this proof we consider the image of several sets under the developing map and  use the  notation $X=\dev(X')$.
 
Let $N=M\times S^1$ and $\pi_{_{N'}}:\widetilde N'=\widetilde M\times\RR^+\rightarrow N$ be the universal cover,
and $\dev:\widetilde N'\rightarrow\RR^{n+1}_0$ the developing map for $N$.
Let $R'=\widetilde M\times t$ for some $t>0$, and choose a basepoint $p'\in R'$.
Let $V\subset\RR^{n+1}$ be a $2$-dimensional vector subspace that contains $p=\dev(p')$. 
Since $V$ is preserved by homothety, $\dev^{-1}V = X\times \RR^+$ for some $1$-submanifold
$X\subset \widetilde M$.
Let $C'$
be the component of $R'\cap \dev^{-1}V$ that contains $p'$. Then $C'$ is a 
connected curve in $\dev^{-1}V$ without endpoints
  that is transverse to
the homothety flow and convex outwards. 

The curve $C=\dev(C')$ is immersed in $V_0$, contains $p$,  is everywhere transverse to the radial direction, and convex outwards: radially away from $0$.

\halfgap

{\bf Claim 1:}  $\dev|C'$ is injective. 

Let $\pi:V_0\to S^1$ be radial projection $\pi(x)=x/\|x\|$. 
We  show that $\theta=\pi\circ \dev:C'\to S^1$ is injective. Since $C'$ is transverse to the radial direction in $\widetilde N'$, it follows that
 $\theta$ is an immersion. Let $\ell\subset V$ be the tangent line to $C$ at $p$. Suppose
 $q\in C\cap\ell$ is distinct from $p$. Then at some point $r$ in $C$ between $p$ and $q$, the distance 
 of $r$ from $\ell$ is a maximum. This contradicts that $C$ is convex outwards at $r$.
  Hence $\pi(\ell)$ is an open semi-circle in $S^1$ that contains $\pi (C)$. Thus $\theta$ is an immersion of $C'$ into this arc,  therefore $\theta$ is injective, and it follows that $\pi$ is injective. This proves Claim~1.

\halfgap
 
{\bf Claim 2:} $C$ is a closed subset of $V$. 

Otherwise $C$ limits on a point $w$ in $V$.  Let $\widetilde\kappa:[0,1)\rightarrow V$ be an affine map
with image
$[p,w)$.
\begin{figure}[h]
                       \centering\includegraphics[scale=0.7]{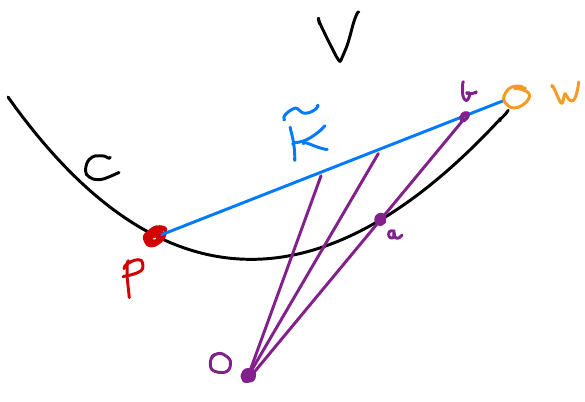}
 \caption{$\widetilde\kappa$ in $V$} \label{kappa}
\end{figure}
Refer to \Cref{kappa}. Now $C'\times\RR^+\subset\widetilde N'$
so there is an affine ray $\widetilde\kappa':[0,1)\rightarrow C'\times\RR^+$ with $\kappa=\dev\circ\widetilde\kappa'$
that starts at $\widetilde\kappa'(0)=p'$. This ray  leaves every compact set in $\widetilde N'$, since otherwise it can be extended, and this would extend $C'$.
Thus $\kappa'=\pi_{_{N'}}\circ\widetilde\kappa':[0,1)\rightarrow N$ 
 spirals in towards, and accumulates on, 
  some subset $F$ of $M\times t\subset N$. In other words $F$ is the forward limit set of $\kappa'$. 
  To justify this, in \Cref{kappa} the ray $\widetilde\kappa$ approaches $C$ in $V$. However in $N$ the ray $\kappa'$ has infinite length. Choose
  a product metric on $N=M\times S^1$. The universal cover of $S^1$ is $\RR^+$ and we use the metric on $\RR^+$ given by $d(a,b)=|\log(a/b)|$.
  From \Cref{kappa} one can see the distance in $N$ of points on $\kappa'$ from $M\times t$ in the $S^1$ direction goes to $0$.

 Now $F$ is flat because $\kappa'$
 is affine. The closure of some component of $\pi_{_{N'}}^{-1}(F)$ in $\widetilde N'$ is not compact, otherwise $\kappa'$
converges to a limit point in $F$ that maps to $w$. This
  contradicts that $M\times t$ is strongly locally convex, and  thus contradicts that $M\times S^1$ is radial convex. 
 This proves Claim 2.
 
\halfgap

Thus every point in $R'$ is connected to $p'$ by some segment in $\widetilde N'$. Using the radial flow it follows that $\widetilde N'$
is starshaped from $p'$. But $p'$ was arbitrary, so $\widetilde N'$ is convex. Hence $\dev:\widetilde N'\rightarrow \RR^{n+1}_0$ is injective.
Moreover since $\kappa$ lies above $C$ it follows that  $\
X=\dev(\widetilde M\times [1,\infty))$ is convex. 
Now $\bdy X=\dev(\widetilde M\times 1)$ is a properly embedded convex hypersurface in $\RR^{n+1}$. Thus $X$ is properly convex in $\RP^{n+1}$.
Let $Y$ be the closure of $X$ in $\RP^{n+1}$. Then $Y$ is properly convex and
 it follows that $\RP^n_{\infty}\cap Y$ is properly convex. Now $Y$ is the image of the projective structure on $M$ under the developing map
 into $\RP^n_{\infty}$. Hence $M$ is properly convex. \end{proof}

If $W$ is affine,  then a  hypersurface, $S\subset W$, is  {\em simplicial}  if it has a triangulation
by flat simplices.   
If $S$ is locally convex and simplicial, and every connected flat subset of $S$ is contained in a simplex in $S$,
then $S$ is called {\em simplicial convex}. 
These are a substitute for {\em strictly convex}. Observe that a locally convex simplicial surface is simplicial convex if and only if adjacent
simplices the same dimension as $S$ are never in the same hyperplane.

\begin{lemma}\label{stabilitylemma} Suppose that $S$ is a simplicial convex hypersurface in an affine $n$-manifold $W$. Then
 for each vertex $v$ of $S$, there is a neighborhood $U(v)$,
such that the hypersurface $S'$ obtained  
 by moving each vertex $v$ inside $U(v)$ is simplicial convex. \end{lemma}
 \begin{proof}  This is a local question, so we assume $W=\RR^n$. The general case follows using the developing map.
   The simplicial convex condition at the vertex $v$ is equivalent to the positivity of the determinants
 of certain $n\times n$ matrices formed by vectors of the form $u-v$ for certain vertices, $u$, that are adjacent to $v$ in $S$.
 If the vertices of $S$ are moved a small enough distance, these determinants remain positive. Thus $S'$
is simplicial convex. Now we explain the determinant condition.

 Let $K$ be the union of the simplices in $S$ that contain $v$. 
 Suppose $\sigma$
 is a simplex in $K$ of dimension $(n-1)$.  Then $\sigma$ is contained in a hyperplane $H$.
 If $K$ is  simplicial convex, then all of $K$ is on one side of $H$.

 Conversely, $S$ is simplicial convex at $v$ if this condition holds
 for each such $\sigma$. Let $\{v_0,\cdots,v_{n-1}\}$ be the vertices of $\sigma$.
 Let $f(u)=\det(v_0-u:v_1-u:\cdots:v_{n-1}-u)$. The condition for simplicial convexity is
 that the sign of $f(u)$ is constant as $u$ ranges over all vertices in $K$ that are connected by an edge to $\sigma$
and that are not in $\sigma$. This ensures the simplices adjacent to $\sigma$ in $K$ all lie on the same side
of the hyperplane that contains $\sigma$.
 \end{proof}

\gap
The {\em convex hull, $\CH(Y)$,} of $Y\subset \RR^n$ is  the intersection of all the {\em closed} convex sets that contain $Y$.
Let $\pi_{\xi}:V_0\to\PP_+(V)$ be the projection $\pi_{\xi}(x)=[x]_+$.
The  next result can also be proven by using the existence of  an {\em affine sphere} \cite{MR437805, MR2743442}.  
\begin{theorem}\label{chulllemma} Suppose $\Ccal\subset\RR^{n+1}$ is a convex cone, $\Omega=\pi_{\xi}(\Ccal)\subset\RP_+^n$ and $M=\Omega/\Gamma$ is a properly convex closed manifold. 

Then there 
 is a convex proper submanifold $P\subset \Ccal$  with $t\cdot P\subset P$ for all $t\ge 1$,
  and $(\pi_{\xi}|\bdy P):\bdy P\to \Omega$ is a homeomorphism.
 Moreover, $P$ can be chosen such that $\bdy P/\Gamma\subset\Ccal/\Gamma$ is a compact, simplicial convex, simplicial hypersurface.
 \end{theorem}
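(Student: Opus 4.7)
The plan is to construct $P$ as the region in $\Ccal$ lying on the outward side of a $\Gamma$-invariant piecewise-linear approximation of the Vinberg characteristic hypersurface $\Scal=\bdy\Pcal\subset\Ccal$ from Section~\ref{sec:cones}. Recall from (\ref{Vprop}) and the subsequent discussion that $\Pcal$ is a closed convex region satisfying $t\cdot\Pcal\subset\Pcal$ for $t\ge 1$, its boundary $\Scal$ is smooth and strictly convex, both are $\SL(\Ccal)$-invariant (hence $\Gamma$-invariant), and $\Scal$ meets every ray of $\Ccal$ exactly once. By (\ref{dualmfd}), $\Scal/\Gamma$ is homeomorphic to $M^*$, which is closed by (\ref{dualcpct}).

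First I would fix a $\Gamma$-invariant triangulation $\Tcal$ of $\Omega$ by projective simplices, obtained by lifting a triangulation of the closed manifold $M=\Omega/\Gamma$ and refining iteratively as needed. Each vertex $v$ of $\Tcal$ lifts uniquely to the point $\tilde v\in\Scal$ on the ray through $v$. For each top-dimensional simplex $\sigma=[v_0,\ldots,v_n]$ of $\Tcal$, take $\tilde\sigma$ to be the flat Euclidean $n$-simplex in $\RR^{n+1}$ spanned by $\tilde v_0,\ldots,\tilde v_n$; convexity of $\Ccal$ gives $\tilde\sigma\subset\Ccal$. Let $\widetilde S=\bigcup_\sigma\tilde\sigma$ and define
$$P = \{\, t\cdot y : y\in\widetilde S,\ t\ge 1\,\}.$$
Then $\bdy P=\widetilde S$ and $t\cdot P\subset P$ for $t\ge 1$ are immediate. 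Each ray in $\Ccal$ meets $\widetilde S$ in a single point (namely the unique point of $\tilde\sigma$ on that ray, for the unique $\sigma$ whose projective interior contains the projection), so $\pi_\xi|\bdy P:\bdy P\to\Omega$ is a continuous proper bijection, hence a homeomorphism. The quotient $\bdy P/\Gamma$ is a finite simplicial complex homeomorphic to $M$, so compact.

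The crux is ensuring that $\widetilde S$ is locally convex and generic, so that no two top-dimensional simplices sharing a codimension-one face span a common hyperplane. I would extract this from uniform strict convexity of $\Scal$. The pointwise strict convexity from (\ref{Vprop}) descends under $\SL(\Ccal)$ to the compact quotient $\Scal/\Gamma$, yielding uniform lower bounds on second-order convexity. Consequently there is $\epsilon>0$ such that whenever every simplex of $\Tcal$ has diameter less than $\epsilon$ (in any fixed $\Gamma$-equivariant metric on $\Scal$, e.g.\ the one induced from the Hilbert metric on $\Omega$ via $\pi_\xi|\Scal$), each chord simplex $\tilde\sigma$ lies strictly on the origin-side of $\Scal$ and the adjacent chord simplices bend strictly outward across every shared codimension-one face with uniformly positive dihedral defect. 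This forces local convexity of $\widetilde S$ (hence convexity of $P$ as an intersection of halfspaces indexed by its top simplices) and non-coplanarity of adjacent simplices. A sufficiently fine $\Tcal$ is produced by iterative $\Gamma$-equivariant barycentric subdivision.

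The main obstacle is this quantitative passage from smooth strict convexity of $\Scal$ to strict PL convexity of the approximation: pointwise strict convexity alone does not yield a lower bound on the dihedral defect, and compactness of $M^*$ via (\ref{dualcpct}) is essential. As the authors note, one may alternatively replace $\Scal$ by a Cheng--Yau affine sphere in $\Ccal$, whose uniform strict convexity on the compact quotient serves the same purpose.
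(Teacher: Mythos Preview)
Your approach---inscribing a secant PL hypersurface with vertices on the Vinberg surface $\Scal$---is natural, but the convexity step has a genuine gap. The claim that for sufficiently small simplices ``the adjacent chord simplices bend strictly outward across every shared codimension-one face with uniformly positive dihedral defect'' is false: convexity of a secant surface is a Delaunay-type condition on the triangulation, not a size condition. Model $\Scal$ locally (in suitable affine coordinates) by the paraboloid $z=|x|^2$; then two adjacent chord $n$-simplices with vertex sets $\{\tilde p_0,\ldots,\tilde p_n\}$ and $\{\tilde p_1,\ldots,\tilde p_n,\tilde q\}$ bend outward at their shared face if and only if $q$ lies \emph{outside} the circumsphere of $p_0,\ldots,p_n$. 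This condition is scale-invariant, so shrinking simplices cannot force it, and iterated barycentric subdivision---which systematically degrades aspect ratios---readily violates it: already one barycentric subdivision of the plane triangle with vertices $(0,0),(2,0),(0,1)$, lifted to the paraboloid, produces a pair of adjacent chord triangles that is strictly \emph{concave} across an internal edge. Since the paraboloid has constant Hessian, this shows that no uniform lower bound on second-order convexity over the compact quotient can supply the missing ingredient.

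The paper sidesteps this by working from the other side. Rather than inscribe a PL surface inside $\Pcal$, it starts with the smooth convex region $X=\Pcal$ and intersects $X$ with finitely many $\Gamma$-orbits of affine half-spaces, each slicing off a small cap that embeds in the quotient $\Ccal/\Gamma$. The resulting $P$ is convex automatically, as an intersection of convex sets; once the caps cover $\Scal/\Gamma$ the boundary $\bdy P$ is polyhedral, and generic choice of the hyperplanes gives non-coplanarity of adjacent faces. If you wish to salvage the inscribed approach, you must discard the arbitrary triangulation and instead use the lower convex hull of the lifted vertex set (equivalently, the weighted Delaunay triangulation determined by $\Scal$), for which convexity is tautological---but then the combinatorics is dictated by the geometry, not by barycentric subdivision.

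A small aside: $\Scal/\Gamma$ is homeomorphic to $M$ directly via $\pi_\xi|\Scal$, not to $M^*$; and (\ref{dualcpct}) requires $M$ to be strictly convex, which is not hypothesised here.
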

\begin{proof} Let $\Dcal\subset\Ccal$  be the closed convex subset given by (\ref{SPdef}).
Then $N=\Dcal/\Gamma$ is a convex submanifold of $\Ccal/\Gamma$ with boundary
 a  strictly convex hypersurface. Let $H\subset \RR^{n+1}$ be an affine hyperplane
that separates $\Dcal$ into two components, such that the bounded component, $Y$,  projects to a subset $\pi(Y)\subset N$ that is contained 
in a small
ball in $N$.
Replace $N$ by $N\setminus \pi(Y)$. This produces a flat part in the boundary. 
This can be done finitely many times, to produce a
 submanifold $Q\subset N$
with simplicial boundary. The preimage of $Q$ in $\Ccal$ is the required submanifold $P$.  \end{proof}
    
 \begin{corollary}\label{flowconvex} A closed manifold $M$ is properly convex if and only if the tautological circle bundle $M\times S^1$ is radial convex.
 \end{corollary}
 \begin{proof} This follows from (\ref{chulllemma}) and (\ref{sufficient}).\end{proof}

\begin{corollary}[\cite{MR3500374}] A closed properly convex manifold $\Omega/\Gamma$ has a convex polyhedral fundamental domain
in $\Omega$.
  \end{corollary}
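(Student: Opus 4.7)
The plan is to extract a convex polyhedral fundamental domain from Theorem~\ref{chulllemma}. That result supplies a $\Gamma$-invariant convex submanifold $P\subset \Ccal$ whose boundary is a simplicial hypersurface with compact quotient $\partial P/\Gamma$, and such that $\pi_{\xi}|\partial P$ is a homeomorphism onto $\Omega$. Transferring the simplicial structure through this homeomorphism yields a locally finite, $\Gamma$-invariant decomposition of $\Omega$ into convex projective simplices, with only finitely many $\Gamma$-orbits of top-dimensional cells. This is the combinatorial object from which I would produce the fundamental domain.

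First I would verify that each top-dimensional face $\sigma$ of $\partial P$, being flat in $\Ccal$, maps under $\pi_{\xi}$ to a convex projective simplex of $\Omega$ cut out by the projective hyperplanes spanned by its codimension-one faces. The images assemble into the $\Gamma$-invariant polyhedral decomposition of $\Omega$ described above, whose cells are each convex; by compactness of $\partial P/\Gamma$, there are only finitely many orbits of cells of each dimension, and every compact subset of $\Omega$ meets only finitely many cells.

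Next, to obtain a single convex polyhedral fundamental domain, I would start with a connected union $F$ of closed top-dimensional simplices meeting each $\Gamma$-orbit exactly once in its interior. Since adjacent simplices are generic-convex, $F$ is in general non-convex across its shared codimension-one faces, which is the main obstacle. To overcome this, I would rerun the argument of Theorem~\ref{chulllemma} with additional $\Gamma$-equivariantly chosen affine hyperplane cuts, refining $\partial P$ (and therefore the decomposition of $\Omega$) until a connected fundamental region emerges that is the intersection of $\Omega$ with finitely many projective half-spaces, and hence is a convex polyhedron. The hardest step is the equivariant refinement: showing that the hyperplane cuts can be chosen compatibly with the $\Gamma$-action so that the resulting fundamental region is genuinely convex. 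This reduces to a finiteness argument, using that only finitely many $\Gamma$-translates of any bounded polytope meet a fixed compact subset of $\Omega$, and that convexity of a connected union of projective simplices can be tested locally at codimension-one faces.
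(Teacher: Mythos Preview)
Your first two steps are sound: projecting the simplicial hypersurface $\partial P$ from Theorem~\ref{chulllemma} via $\pi_{\xi}$ does give a locally finite $\Gamma$-invariant decomposition of $\Omega$ into convex projective simplices with finitely many orbits, and a connected union of orbit representatives is a polyhedral fundamental domain. The gap is in the third step. You propose to refine the triangulation by additional equivariant hyperplane cuts ``until a connected fundamental region emerges that is the intersection of $\Omega$ with finitely many projective half-spaces.'' But refinement only makes the simplices smaller; a connected union of orbit representatives among smaller simplices has no better reason to be convex than before, and you supply no mechanism that forces convexity after any finite number of cuts. The claim that this ``reduces to a finiteness argument'' is not justified: local finiteness tells you there are only finitely many codimension-one faces to inspect, but it gives no procedure for straightening a non-convex fundamental region into a convex one while preserving the property of being a fundamental domain.

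The paper's argument is conceptually different and bypasses this difficulty entirely. Instead of assembling simplices, it takes a single closed halfspace $H$ supporting the convex body $X$ (the region on and above the Vinberg hypersurface $\Scal$ inside $\Ccal\Omega$) at one point $x$, and forms $Y=\bigcap_{\gamma\in\Gamma}\gamma H$. This intersection is automatically convex and $\Gamma$-invariant; local finiteness of the orbit $\Gamma\cdot x$ makes $\partial Y$ locally polyhedral, and the face $Q=Y\cap\partial H$ is a convex polytope whose $\Gamma$-translates tile $\partial Y$. Then $\pi_{\xi}(Q)$ is the desired convex polyhedral fundamental domain. The idea you are missing is to build the domain as a \emph{face of an intersection of $\Gamma$-translated halfspaces} (a Dirichlet-type construction) rather than as a \emph{union of simplices}.
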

  \begin{proof} Use the notation in the proof of (\ref{chulllemma}).
  Let $H$ be a closed halfspace that contains $\Dcal$ and such that $(\bdy\Dcal)\cap\bdy H$ is a single point. 
  Let $Y=\cap(\gamma\cdot H)$ where the intersection is over all $\gamma\in\Gamma$. Then
  $\Dcal\subset Y\subset\Ccal\Omega$, and $Q=Y\cap\bdy H$ is a convex polytope.
  Moreover $\bdy Y=\Gamma\cdot Q$ is a locally finite union of images of $Q$.
 It follows that $\pi_{\xi}(Q)$
  is a convex polyhedral fundamental domain.
  \end{proof}

\begin{proof}[Proof of \Cref{open}]
Suppose $\rho_0\in \Rep_P(M)$. This has a tautological circle bundle $M\times S^1$. 
Choose a triangulation of $M\times S^1$ so that $M\times 1$ is a simplicial convex hypersurface.
If $\rho\in\Rep_P(M)$ is  close enough to $\rho_0$,  then  (\ref{ET}) gives a nearby projective structure  $M_{\rho}$ with
 holonomy $\rho$ so there is a nearby tautological circle bundle
$M_{\rho}\times S^1$ for this projective structure.
Moreover since the vertices have only been moved a small amount, (\ref{stabilitylemma}) implies that $M_{\rho}\times 1$ is simplicial convex in this structure, so $M_{\rho}\times S^1$ is radial convex.
 By (\ref{sufficient}) the  projective structure  $M_{\rho}$ is a properly convex structure. Thus $\rho\in\Rep_P(M)$.
\end{proof}

\section{Closed}

Here is an outline of the proof of the Closed Theorem.
Suppose $\rho_k$ is a sequence of holonomies of properly convex real projective structures on $M$,
so that $M\cong \Omega_k/\rho_k(\pi_1M)$ with $\Omega_k$ properly convex.
Suppose the holonomies converge pointwise to
 $\lim\rho_k=\rho_{\infty}$. If $M$ is strictly convex, a special case of Chuckrow's theorem (\ref{discfaith}) implies $\rho_{\infty}$
 is discrete and faithful; in general $\rho_{\infty}$ is neither.
 After taking a subsequence we may assume $\Omega_{\infty}=\lim\Omega_k\subset\RPn$ exists.
 If $\Omega_{\infty}$ is properly convex,  then $\Omega_{\infty}/\rho_{\infty}(\pi_1M)$ is a properly convex structure on $M$.
But $\Omega_{\infty}$
might have smaller dimension, or it might not be properly convex.  We describe this by saying {\em the domain
has degenerated}.  

An example of this is provided by a sequence of properly convex projective structures on a torus
given by $\Delta/\Gamma_n$, where $\Delta$ is the interior of a triangle and $\Gamma_n\cong \ZZ^2$ is a discrete group
of diagonal matrices (using a basis given by the vertices of $\Delta$)
generated by $\alpha_n$ and $\beta_n$ and $\alpha_n,\beta_n\to\Id$. There are conjugates of these groups that converge to a 
discrete group acting on the  plane, and the quotient is a Euclidean torus that is not properly convex.

The {\em box estimate} (\ref{boxestimate}) implies that one may replace the original sequence $\rho_k$
by conjugates $\rho'_k$ which preserve domains $\Omega'_k$ and  $\lim\rho_k'=\sigma$,
and 
$\lim \Omega'_k=\Omega$ is properly convex.
  Then $N=\Omega/\sigma(\pi_1M)$ is a properly convex manifold
 homotopy equivalent to $M$. Hence $N$ is closed  and $\pi_1M\cong\pi_1N$. Also $N$ is strictly convex
 by (\ref{stricthomeo}). Finally, $\sigma$ is irreducible by (\ref{irreducible}), which implies that, in fact,
 the original domains $\Omega_k$ did not degenerate. Thus $\rho_{\infty}$ is the holonomy
 of a strictly convex structure on $M$.  
 
 \medskip
 
 We supply the missing details, starting with the algebraic preliminaries.
  
\begin{theorem}[Irreducible]\label{irreducible} Suppose $M=\Omega/\Gamma$ is a strictly convex closed manifold and $\dim M\ge 2$.
Then  $\Gamma$ does not preserve any proper projective subspace. 
 \end{theorem}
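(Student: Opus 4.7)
The plan is to argue by contradiction: suppose $\Gamma$ preserves a proper projective subspace $L\subsetneq\PP V$, and split into three cases according to how $L$ meets $\cl\Omega$.

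\textbf{Case 1: $L\cap\Omega\neq\emptyset$.} Then $L\cap\Omega$ is a nonempty convex $\Gamma$-invariant subset of $\Omega$, and it is properly-convex because its closure lies inside the properly-convex $\cl\Omega$ and so contains no projective line. By (\ref{noinvt}), $L\cap\Omega=\Omega$, i.e.\ $\Omega\subset L$---impossible, since $\Omega$ is open in $\PP V$ while $L$ has empty interior.

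\textbf{Case 2: $L\cap\cl\Omega=\emptyset$.} I pass to the dual. Writing $L=\PP U$, set $L'=\PP(U^0)\subset\PP V^*$; the dual action of $\Gamma$ preserves $L'$. Since the linear subspace $U$ meets the closed convex cone $\cl\Ccal\Omega$ only at $0$, Hahn--Banach (for cones) yields $\phi\in V^*$ with $\phi|_U=0$ and $\phi>0$ on $\cl\Ccal\Omega\setminus\{0\}$, so $[\phi]\in L'\cap\Omega^*$. By (\ref{dualcpct}) $M^*=\Omega^*/\Gamma$ is closed properly-convex, and (\ref{noinvt}) applied to $\Omega^*$ then forces $\Omega^*\subset L'$---again impossible.

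\textbf{Case 3: $L\cap\Omega=\emptyset$ but $L\cap\cl\Omega\neq\emptyset$.} Here $L\cap\cl\Omega$ is a nonempty convex subset of $\Fr\Omega$. Since $\Omega$ is strictly convex, $\Fr\Omega$ contains no segment, so $L\cap\cl\Omega=\{p\}$ for a single $p\in\Fr\Omega$ and $p$ is $\Gamma$-fixed. By (\ref{dualcpct}) the supporting hyperplane $H_p$ at $p$ is unique, and therefore $\Gamma$-invariant. For any nontrivial $\gamma\in\Gamma$, (\ref{hyperbolic}) says $\gamma$ is hyperbolic with $H_\pm^\gamma\cap\cl\Omega=\{a_\pm^\gamma\}$; any fixed point of $\gamma$ outside $H_+^\gamma\cup H_-^\gamma$ would, by (\ref{hyperbolic}), have to equal $a_+^\gamma\in H_+^\gamma$, which is absurd, so every $\gamma$-fixed point lies in $H_+^\gamma\cup H_-^\gamma$. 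Since $p\in\cl\Omega$ is $\gamma$-fixed, strict convexity forces $p\in\{a_+^\gamma,a_-^\gamma\}$. Hence \emph{every} axis of a nontrivial element of $\Gamma$ has $p$ as an endpoint.

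Case 3 is the main obstacle. To close it, I would invoke the Busemann cocycle at $p$: the limit $b_p(x)=\lim_{y\to p}[d_\Omega(x,y)-d_\Omega(x_0,y)]$ exists because $\Omega$ is strictly convex and $C^1$ at $p$, and since $\Gamma$ fixes $p$ the quantity $\beta(\gamma):=b_p(\gamma x)-b_p(x)$ is independent of $x$ and defines a group homomorphism $\beta\colon\Gamma\to\RR$ with $|\beta(\gamma)|=t(\gamma)$. By (\ref{hyperbolic}) $t(\gamma)>0$ for every $\gamma\ne 1$, so $\beta$ is injective; therefore $\Gamma$ is abelian, and then (\ref{nilpotent}) forces $\Gamma$ to be cyclic. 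But then $M=K(\Gamma,1)\simeq S^1$, contradicting $H_n(M;\ZZ_2)\cong\ZZ_2$ for the closed $n$-manifold $M$ with $n\ge 2$.
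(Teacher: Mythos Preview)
Your Cases 1 and 2 are exactly the paper's argument. Case 3 is where you diverge. The paper, after obtaining the common fixed point $Y\in\Fr\Omega$, uses the $C^1$ property at $Y$ to argue directly that any two axes $\ell,\ell'$ ending at $Y$ are asymptotic ($d_\Omega(p,\ell')\to 0$ as $p\to Y$ along $\ell$); projecting to $M$, the corresponding closed geodesics get arbitrarily close, hence coincide, so $\Gamma$ preserves a single axis, contradicting (\ref{noinvt}) since $\dim M\ge 2$. You instead package the same $C^1$ information into a Busemann cocycle $\beta:\Gamma\to\RR$ with $|\beta(\gamma)|=t(\gamma)>0$, forcing $\Gamma$ to embed in $\RR$ and hence (via (\ref{nilpotent})) be cyclic.

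Both routes ultimately rest on the same geometric fact---that at a $C^1$ point of $\Fr\Omega$ geodesic rays are asymptotic, equivalently that the horofunction $b_p$ is well-defined---and neither proof spells this out in full. Your formulation has the virtue of a clean algebraic punchline (an injective homomorphism to $\RR$) and avoids the somewhat compressed step ``$C=C'$ so $\ell=\ell'$'' in the paper. On the other hand, since the paper aims to be self-contained and does not develop Busemann functions, you should either supply the short argument that $b_p(x)=\lim_{y\to p}[d_\Omega(x,y)-d_\Omega(x_0,y)]$ exists (e.g.\ by first defining it along a fixed ray to $p$ and then using asymptoticity of rays at a $C^1$ point to see the limit is path-independent), or cite a source.
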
 
\begin{proof}  Suppose $\RR^{n+1}=U\oplus V$ with $U\ne 0\ne V$ and $\Gamma\subset\GL(n+1,\RR)$ preserves $U$. Then
$\Gamma$ preserves $Y=\cl\Omega\cap\PP_+ U$. 
Now  $Y\cap \Omega= \emptyset$ by (\ref{noinvt}).
Suppose $Y=\emptyset$. By the hyperplane separation theorem, there is $\phi\in (\RR^{n+1})^*$ such that $\ker \phi$ contains
$U$, and $\phi(\cl\Omega)>0$. 
Thus $[\phi]\in (\PP_+ U^0)\cap\Omega^*$.
By (\ref{dualcpct}) the dual manifold $M^*=\Omega^*/\Gamma$ is compact and strictly convex, and the dual action of $\Gamma$ preserves $U^0$, 
which  contradicts (\ref{noinvt}). Thus $Y\ne\emptyset$, so
 $Y\subset\Fr\Omega$.
 Since $M$ is strictly convex, $Y=c$ is a single point that is fixed by $\Gamma$. 
 
 Thus every non-trivial element of $\Gamma$ has an axis
 with one endpoint at $c$. 
  If $\ell$ and $\ell'$ are the axes of $\gamma,\gamma'\in\Gamma$,  then they limit on $c$. Now $\Omega$ is $C^1$ at $c$ by (\ref{C1})
 so $d_{\Omega}(p,\ell')\to0$ as the point $p$ on $\ell$ approaches $c$. Let $\pi:\Omega\to M$
 be the projection.  Then $C=\pi(\ell)$ and $C'=\pi(\ell')$ are closed geodesics in $M$
  that become arbitrarily close to each other. It follows that $C=C'$, so $\ell=\ell'$. Thus $\Gamma$
 preserves $\ell$ which contradicts (\ref{noinvt}) unless $\Omega=\ell$, but then $\dim M=1$.\end{proof}
 
The following is a special case of Chuckrow's theorem (see \cite{CHU} or \cite{KAP}(8.4)).
\begin{lemma}\label{discfaith} If $M$ is a closed, strictly convex, projective manifold
and $\dim M\ge 2$, then  the closure of $\Rep_S(M)$ in $\Rep(M)$ consists of discrete and faithful representations.
\end{lemma}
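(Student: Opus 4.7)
My plan is to combine the classical Zassenhaus lemma---there is a neighbourhood $U$ of $I$ in $\PGL(n+1,\RR)$ such that any discrete subgroup generated by elements of $\Gamma\cap U$ is nilpotent---with (\ref{nilpotent}). For each $\rho_k\in\Rep_S(M)$, the image $\Gamma_k=\rho_k(\pi_1M)$ is discrete and torsion-free (so $\rho_k$ is faithful), and (\ref{nilpotent}) upgrades Zassenhaus's conclusion to say $\langle\Gamma_k\cap U\rangle$ is cyclic. First I would shrink $U$ to a symmetric $V$ with $V\cdot V\subset U$, set $K:=\rho_\infty^{-1}(V)\subset\pi_1M$, and note that for any finite $F\subset K$ one has $\rho_k(F)\subset U$ for $k$ large, so $\langle F\rangle$ is cyclic in $\pi_1M$ by faithfulness of $\rho_k$. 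In particular, any two elements of $K$ commute, so $\langle K\rangle$ is abelian; applying (\ref{nilpotent}) once more to $\rho_k(\langle K\rangle)\subset\Gamma_k$ shows that $\langle K\rangle=\langle c\rangle$ is a cyclic subgroup of $\pi_1M$. Throughout, write $C(x)$ for the centralizer of $x$ in $\pi_1M$; by (\ref{nilpotent}) each such $C(x)$ is cyclic.

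Assume towards contradiction that $\rho_\infty$ is not faithful, and pick $g\in\ker\rho_\infty\setminus\{1\}$. Every conjugate $hgh^{-1}$ lies in $\ker\rho_\infty\subset\langle c\rangle$; by (\ref{hyperbolic}) each $\rho_k(c)$ is hyperbolic with strictly positive translation length, and since conjugation is an isometry of $d_{\Omega_k}$, the only elements of $\langle c\rangle$ conjugate to $g$ in $\pi_1M$ are $g$ and $g^{-1}$. Thus $[\pi_1M:C(g)]\le 2$ with $C(g)$ cyclic, so $\pi_1M$ is virtually cyclic. But $\pi_1M$ is torsion-free (as it acts freely on $\Omega_1$), so it must be trivial or $\ZZ$, making $M$ a point or $S^1$---contradicting $\dim M\ge 2$.

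If instead $\rho_\infty$ is faithful but its image is not discrete, then $K$ contains infinitely many distinct $c^{m_i}$ with $|m_i|\to\infty$. The group $\pi_1M$ is not cyclic (an abelian $\pi_1M$ would be cyclic by (\ref{nilpotent}), forcing $\dim M\le 1$), so choose $h\in\pi_1M$ not commuting with $c$. Then $\rho_\infty([c^{m_i},h])\to I$, so $[c^{m_i},h]\in K\subset\langle c\rangle$ for large $i$, and is non-trivial because $h\notin C(c)$. Writing $[c^{m_i},h]=c^{b_i}$ and rearranging yields $hc^{m_i}h^{-1}=c^{m_i-b_i}$; conjugation-invariance of translation length forces $b_i=2m_i$, i.e.\ $hc^{m_i}h^{-1}=c^{-m_i}$. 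Because centralizers in $\pi_1M$ are cyclic, $a^p=b^p\ne 1$ implies $a=b$, so this upgrades to $hch^{-1}=c^{-1}$; combining $h^2\in C(c)=\langle c_0\rangle$ with the inversion relation on $c_0$ (obtained by the same unique-root argument) gives $c_0^{2s}=1$ where $h^2=c_0^s$, so $s=0$, $h^2=1$, and $h=1$ by torsion-freeness---contradicting $h\notin C(c)$. The main obstacle is precisely this final commutator chain: promoting the asymptotic relation $\rho_\infty(c^{m_i})\to I$ to the exact algebraic identity $hch^{-1}=c^{-1}$ in $\pi_1M$ requires a careful interplay of the translation-length invariance from (\ref{hyperbolic}) with the cyclic-centralizer structure from (\ref{nilpotent}).
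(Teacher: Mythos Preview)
Your argument is correct, and it takes a genuinely different route from the paper's. The paper obtains discreteness and faithfulness simultaneously via a single Margulis--type estimate: using that $\theta(g,h)=[[g,h],h]$ has vanishing first derivative at $h=I$, there is a neighbourhood $U$ of $I$ with $\|[[k,u],u]\|\le\|u\|/2$ for $k$ in a fixed compact set and $u\in U$; hence for each $\rho_n$ the non-trivial element $\alpha$ of smallest norm must lie outside $U$ (otherwise $[[\beta,\alpha],\alpha]=1$ for every generator $\beta$, forcing $\alpha$ central and contradicting (\ref{noinvt})). This gives a uniform lower bound $\|\rho_n(\gamma)\|\ge\epsilon$ for all non-trivial $\gamma$ and all $n$, which passes directly to the limit. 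By contrast, you invoke the Zassenhaus neighbourhood as a black box, deduce that $\langle\rho_\infty^{-1}(V)\rangle$ is cyclic, and then run two separate case analyses---a virtually-cyclic contradiction for faithfulness, and a dihedral/torsion contradiction for discreteness---each exploiting translation-length rigidity from (\ref{hyperbolic}) and the unique-roots property coming from cyclic centralizers. Your approach trades a short ad~hoc commutator estimate for a well-known lemma plus more explicit group theory; the paper's is terser, but yours makes the algebraic obstruction (no infinite dihedral or virtually-$\ZZ$ subgroups) more transparent. One minor remark: when you cite (\ref{nilpotent}) for ``$C(x)$ is cyclic'', you are really using the \emph{proof} of (\ref{nilpotent}) (any element commuting with $x$ preserves its axis), not the statement as written.
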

\begin{proof} Let $d$ be the metric on $G=\GL(m+1,\RR)$  given by $d(g,h)=\max|(g-h)_{ij}|$ and set $\|g\|=d(I,g)$.
Let $\Wcal\subset\pi_1M$ be a finite generating set. 
Suppose the sequence $\rho_n\in \Rep_S(M)$ converges to $\rho_{\infty}\in \Rep(M)$.
Then there is a compact set $K\subset G$ such that $\rho_n(\Wcal)\subset K$
for all $n$. 

The  map $\theta:G\times G\to G$ given by $\theta(g,h)=[[g,h],h]$  has zero derivative on $G\times\Id$.
Thus there is a neighborhood $U\subset G$ of the identity such that if $k\in K$ and $u\in U$,  then 
$\|\; [[k,u],u]\;\|\le \|\;u\;\|/2$.

Since $\rho=\rho_n$ is discrete, there is $1\ne\alpha\in\pi_1M$ that minimizes $\|\rho(\alpha)\|$.
Suppose that $\rho(\alpha)\in U$. If $\beta\in \Wcal$, then $\rho(\beta)\in K$, so $\|\;\rho[[\beta,\alpha],\alpha]\;\|\le \|\;\rho \alpha\;\|/2$. By minimality,
$\rho[[\beta,\alpha],\alpha]=\Id$, and
since $\rho$ is injective,
$[[\beta,\alpha],\alpha]=\Id$.  
 By (\ref{nilpotent}) $\alpha$ and $\beta$ commute.
Since $\Wcal$ is a generating set, $\alpha$ is central in $\pi_1M$. Thus the entire group preserves the axis of $\alpha$.
This contradicts (\ref{noinvt}) because $\dim M \ge 2$.
Thus $\rho(\alpha)\notin U.$ Since $U$ is an open neighbourhood of the identity, it contains an open metric ball $U'$, and since $\rho(\alpha)$ is of minimal norm, we have $\rho(\gamma)\notin U'$ for all $1\neq \gamma \in \pi_1M.$ This implies that $\rho_{\infty}$ is discrete and faithful.
\end{proof}

Let $\boxset=\prod_{i=1}^n[-1,1]\subset\RR^n\subset\RP^n$. For each $K>0$, the set $K\cdot \boxset=\prod_{i=1}^n[-K,K]$ is called a {\em box}.

\begin{lemma}[Box estimate]\label{boxestimate} If $A=(A_{ij})\in \GL(n+1,{\mathbb R})$
 and $K\ge1$ and $[A](\boxset)\subset K.\boxset$,  then $$|A_{ij}|\le 2K\cdot|A_{n+1,n+1}|$$
\end{lemma}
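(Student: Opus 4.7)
The plan is to dehomogenize and test the constraint against the origin and the axis vertices $\pm e_j$ of the box. Write points of $\boxset \subset \RR^n \subset \RP^n$ as $[x_1:\cdots:x_n:1]$ with $|x_j|\le 1$, and set $\alpha := A_{n+1,n+1}$. The image $[A][x:1]$ in the standard affine chart $\{x_{n+1}\ne 0\}$ is
\[
\frac{1}{L(x)}\Bigl(\sum_{j=1}^n A_{1j} x_j + A_{1,n+1},\; \ldots,\; \sum_{j=1}^n A_{n,j} x_j + A_{n,n+1}\Bigr), \qquad L(x) := \sum_{j=1}^n A_{n+1,j} x_j + \alpha .
\]
The hypothesis that this lies in the bounded set $K\cdot\boxset\subset\RR^n$ forces $L(x)\ne 0$ on $\boxset$. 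Since $L(0)=\alpha$, after replacing $A$ by $-A$ if needed (which does not affect $[A]$ or any $|A_{ij}|$) we may assume $\alpha>0$, and then $L>0$ throughout $\boxset$ by connectedness.

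Next, test the containment at selected points. Plugging in $x=0$ gives $|A_{i,n+1}|\le K\alpha$ for all $i\le n$, handling the last column. For each $j\le n$, positivity of $L(\pm e_j) = \alpha \pm A_{n+1,j}$ gives $|A_{n+1,j}|<\alpha\le 2K\alpha$, handling the last row. For $i,j\le n$, the two containment inequalities
\[
|A_{ij} + A_{i,n+1}| \le K(\alpha + A_{n+1,j}), \qquad |{-A_{ij}} + A_{i,n+1}| \le K(\alpha - A_{n+1,j})
\]
combine by the triangle inequality to give $2|A_{ij}| \le 2K\alpha$, hence $|A_{ij}|\le K\alpha$. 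The diagonal case $i=j=n+1$ is trivial. Taking the maximum of all these bounds yields $|A_{ij}| \le 2K|A_{n+1,n+1}|$.

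There is no real obstacle; the only step requiring care is the sign/connectedness argument that pins down the sign of $L$ on $\boxset$, after which the computation is a one-line application of the triangle inequality to the two evaluations at $x=\pm e_j$. The factor $2$ in the statement is a slack we do not saturate: for $i,j\le n$ we in fact obtain $|A_{ij}|\le K|A_{n+1,n+1}|$.
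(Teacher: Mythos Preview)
Your proof is correct and follows essentially the same approach as the paper: both test the containment at the origin and at the axis points $\pm e_j$, first showing that the last homogeneous coordinate $L(x)$ never vanishes on $\boxset$ (whence $|A_{n+1,j}|<|\alpha|$), and then reading off bounds on the remaining entries. The only difference is in the final combination step for $i,j\le n$: the paper bounds the denominator $|\alpha+tA_{n+1,j}|\le 2|\alpha|$ and then chooses the sign of $t=\pm1$, arriving at $|A_{ij}|\le 2K|\alpha|$, whereas you add the two inequalities at $t=\pm1$ and use the triangle inequality to get the sharper $|A_{ij}|\le K|\alpha|$, as you note.
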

\begin{proof} Set $\alpha=A_{n+1,n+1}$. Using the standard basis we have
$$[x_1e_1+\cdots +x_ne_n+e_{n+1}]=[x_1:x_2:\cdots:x_n:1]=(x_1,\cdots,x_n)\in\boxset\quad \Leftrightarrow\quad  \max_i |x_i|\le 1$$ 
First consider the entries $A_{i,n+1}$ in the last column of $A$. Since $[e_{n+1}]=0\in \boxset$ we have 
$$[Ae_{n+1}]=[A_{1,n+1}e_1+A_{2,n+1}e_2+\cdots A_{n,n+1}e_n+\alpha e_{n+1}]\in K\cdot\boxset$$ 
It follows that $|A_{i,n+1}/\alpha|\le K$. This establishes the bound
when $j=n+1$ and $i\le n$.

Next consider the entries $A_{n+1,j}$ in the bottom row with $j\le n$. Observe that 
$$p=[t e_j+e_{n+1}]\in\boxset\quad\Leftrightarrow\quad |\;t\;|\le 1$$
Then $[A]p=[A(te_j+e_{n+1})]\in K\cdot \boxset$. This is in ${\mathbb R}^n$ so the $e_{n+1}$ component
is not zero. Hence $t A_{n+1,j}+\alpha\ne 0$ whenever $|\;t\;|\le 1$ and it follows that $|A_{n+1,j}| <|\alpha|$.
Since $K\ge 1$ the required bound follows when $i=n+1$ and $j\le n$.

The remaining entries are $1\le i,j\le n$. Since  $p=[p_1:\cdots:p_{n+1}]=[A(te_j+e_{n+1})]\in K\cdot \boxset$ it follows that
$$|t|\le1\quad\Rightarrow\qquad \left|\frac{p_i}{p_{n+1}}\right|=\left|\frac{A_{i,n+1}+t A_{i,j}}{\alpha + t A_{n+1,j}}\right| \le K$$
 For all $|t|\le1$ the denominator is not zero hence $|A_{n+1,j}|<|\alpha|$. It follows that
$$|\alpha + t A_{n+1,j}|\le 2|\alpha|$$
Thus 
$$|t|\le1\quad\Rightarrow\qquad\left|A_{i,n+1}+t A_{i,j}\right| \le 2K\cdot|\alpha|$$
We may choose the sign of $t=\pm1$ so that $A_{i,n+1}$ and $t A_{i,j}$ have the same sign. Then
$$ \left| A_{i,j}\right|\le \left|A_{i,n+1}+t A_{i,j}\right| $$
This gives the result
$ \left| A_{i,j}\right|\le  2|\alpha|\cdot K$ in this remaining case.
\end{proof}

If $\Omega\subset\RR^n$ has finite positive Lebesgue measure and the {\em centroid} of $\Omega$ is
 $\mu(\Omega)=0$, then
$$Q_{_{\Omega}}(y)=\int_K\left(\|x\|^2\|y\|^2-\langle x,y\rangle^2\right) \dvol_x$$  
 is a positive definite quadratic form on  $\RR^n$ called the {\em inertia tensor}.

\begin{lemma}[Uniform estimate]\label{Klemma} For each dimension $n$ there is $K=K(n)>1$ such that if $\Omega\subset{\mathbb R}^n$ is an open bounded convex
set with inertia tensor $Q_{_{\Omega}}=x_1^2+\cdots+x_n^2$ and centroid at the origin, then $K^{-1}\boxset\subset\Omega\subset K\cdot\boxset$. Moreover, if $A\in\GL(\Omega)$, then $|A_{ij}|\le K\cdot|A_{n+1,n+1}|$.
\end{lemma}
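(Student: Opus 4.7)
The plan is to establish the lemma in two steps. First, show that the family of convex bodies satisfying the normalization is compact in the Hausdorff topology, which yields the uniform box containment. Second, apply the box estimate (\ref{boxestimate}) to an appropriate conjugate of $A$ to deduce the entry bound.

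For the compactness, let $\Fcal_n$ be the set of open bounded convex bodies $\Omega \subset \RR^n$ with centroid at $0$ and inertia form $Q_{\Omega} = x_1^2 + \cdots + x_n^2$. I would show $\Fcal_n$ is compact in the Hausdorff topology on convex bodies. The centroid and the quadratic form $Q_{\Omega}$ depend continuously on $\Omega$ under Hausdorff convergence, since they are integrals of polynomials over $\Omega$; this shows $\Fcal_n$ is closed. The normalization $\int_{\Omega} x_i^2 \, dV = 1$ bounds the diameter uniformly (a convex body with centroid $0$ extending to distance $R$ from the origin must balance on the opposite side, and this forces some second moment to grow with $R$), and it bounds the inradius uniformly from below (an $\Omega$ with inradius tending to $0$ would collapse onto a lower-dimensional set, degenerating $Q_\Omega$). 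These two bounds together with Blaschke's selection theorem give precompactness. With $\Fcal_n$ compact, the continuous positive functions $\Omega \mapsto \inf\{r > 0 : \Omega \subset r\boxset\}$ and $\Omega \mapsto \sup\{r > 0 : r\boxset \subset \Omega\}^{-1}$ are uniformly bounded by some $K_0 \ge 1$, giving $K_0^{-1}\boxset \subset \Omega \subset K_0\boxset$ for every $\Omega \in \Fcal_n$.

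For the second step, fix $A \in \GL(\Omega)$, that is, $A \in \GL(n+1,\RR)$ with $[A]\Omega = \Omega$. From the box containments, $[A](K_0^{-1}\boxset) \subset [A]\Omega = \Omega \subset K_0\boxset$. Letting $D = \Diag(K_0^{-1},\ldots,K_0^{-1},1) \in \GL(n+1,\RR)$, so that $[D]\boxset = K_0^{-1}\boxset$ in affine coordinates, we get $[AD]\boxset \subset K_0\boxset$. The box estimate (\ref{boxestimate}) applied to $AD$ yields $|(AD)_{ij}| \le 2K_0 |(AD)_{n+1,n+1}|$. Reading off entries, $(AD)_{ij} = K_0^{-1}A_{ij}$ when $j \le n$, $(AD)_{i,n+1} = A_{i,n+1}$, and in particular $(AD)_{n+1,n+1} = A_{n+1,n+1}$; multiplying through by $K_0$ gives $|A_{ij}| \le 2K_0^2 |A_{n+1,n+1}|$ in every case. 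Setting $K = 2K_0^2 \ge K_0$ delivers both conclusions of the lemma.

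The main obstacle is Step 1: verifying that the normalization rules out both types of degeneration of convex bodies under Hausdorff limits (escape to infinity and collapse to lower dimension), and that the centroid and inertia form are Hausdorff-continuous so that limits remain in $\Fcal_n$. Once Step 1 is secured, Step 2 is a direct computation with the box estimate.
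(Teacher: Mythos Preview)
Your proposal is correct, and Step~2 is essentially the paper's argument: both reduce to the box estimate (\ref{boxestimate}) after rescaling. The paper conjugates by $D=\Diag(K,\ldots,K,1)$ and applies (\ref{boxestimate}) to $DAD^{-1}$, obtaining $|A_{ij}|\le 2K^4|A_{n+1,n+1}|$; your choice to apply (\ref{boxestimate}) to $AD$ instead is slightly cleaner and yields the better constant $2K_0^2$, but the idea is the same.

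The genuine difference is Step~1. The paper does not argue by compactness: it simply cites Fritz John's theorem to get $K=\sqrt n$ directly. Your Blaschke-selection argument is a valid alternative and is self-contained, at the cost of losing the explicit constant and of having to justify the two degeneration bounds you sketch. Those sketches are sound---the fixed second moments force both a diameter bound (a long spike forces either large volume or collapse of the transverse moments) and an inradius bound (collapse to a hyperplane sends the volume, and hence every entry of $Q_\Omega$, to zero)---but they are where the work lies in your approach, whereas the paper outsources this entirely to John's theorem. Either route proves the lemma.
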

\begin{proof} The first conclusion follows from the theorem of Fritz John~\cite{Fritz}, see also  \cite{ball}.
Let $D\in\GL(n+1,\RR)$ be the diagonal matrix $\Diag(K,\cdots,K,1)$,  then $\boxset\subset D(\Omega)\subset K^2\boxset$.
Set $A'=D\cdot A\cdot D^{-1}$,  then $A'\in\GL(D(\Omega))$, thus $|A'_{ij}|\le 2K\cdot|A'_{n+1,n+1}|$ by  (\ref{boxestimate}).
Now $|A'_{n+1,n+1}|=|A_{n+1,n+1}|$ and $|A_{i,j}|\le K |A'_{i,j}|$ thus
$|A_{ij}|\le 2K^3\cdot|A_{n+1,n+1}|$. The result now holds using the constant $2K^3$.
  \end{proof}

\begin{proof}[Proof of \Cref{closed}]
Suppose $\rho\in\Rep(M)$ is the limit of the sequence $\rho_k\in\Rep_s(M)$.
Let $\Omega_k\subset\RP^n$ be the properly convex open set preserved by $\Gamma_k=\rho_k(\pi_1M)$, then $M\cong M_k=\Omega_k/\Gamma_k$.
Choose
an affine patch ${\mathbb R}^n\subset{\mathbb R}P^n$. Then by (\ref{centerexists}) there is $\alpha_k\in \PO(n+1)$
such that $\alpha_k(\Omega_k)\subset{\mathbb R}^n$ and has center $0\in {\mathbb R}^n$. We may choose $\alpha_k$ so that the
interia tensor $Q_k=Q_{\alpha_k(\Omega_k)}$ is diagonal in the standard coordinates on ${\mathbb R}^n$, and the entries on the main
diagonal of $Q_k$ are non-increasing going down the diagonal.
Since $\PO(n+1)$ is compact, after subsequencing
we may assume the conjugates of $\rho_k$ by $\alpha_k$ converge. 
We now replace the original sequence of representations
and domains by this new sequence.

Let $K=K(n)$ be given by (\ref{Klemma}). There is a unique positive diagonal matrix $D_k$
such that $Q_k=D_k^{-2}$. Set $\Omega'_k=D_k\Omega_k$, then $Q_{\Omega'_k}=x_1^2+\cdots+x_n^2$.
By (\ref{Klemma}), there is $K>1$ depending only on $n$, such that $$K^{-1}\cdot\boxset_1\subset \Omega'_k\subset K\cdot\boxset_1$$
Given $g\in\pi_1M$, then $A=A(k,g)=\rho_k(g)\in \SL(n+1,{\mathbb R})$ preserves $\Omega_k$.
 The matrix \[B=B(k,g)=D_k A(k,g) D_k^{-1}\] preserves $\Omega'_k$. By (\ref{Klemma})  $$\forall i,j\quad |B_{i,j}|\le K\cdot |B_{n+1,n+1}|$$ Since $D_k$ is diagonal it follows that $$B_{n+1,n+1}=A_{n+1,n+1}$$
Now $A_{n+1,n+1}=A(k,g)_{n+1,n+1}$ converges as $k\to\infty$ for each $g$. Hence the entries of $B(k,g)$ are uniformly bounded for fixed
$g$ as $k\to\infty$.
 Thus we may pass to a subsequence where $B(k,g)=D_k\rho_k(g)D_k^{-1}$ converges
for every $g\in\pi_1M$, and this gives a limiting representation $\sigma=\lim D_k\rho_kD_k^{-1}$. 

The space consisting of properly convex open sets $\Omega$
with $K^{-1}\cdot \Bcal_1\subset\Omega\subset K\cdot\Bcal_1$ is compact. Therefore there is a subsequence
 so that
$\Omega=\lim\Omega_k'$ exists. 
 Then $K^{-1}\cdot\boxset_1\subset \Omega\subset K\cdot\boxset_1$, hence
$\Omega$ is open and properly convex. Then $\sigma$ is discrete and faithful by (\ref{discfaith}), and $\Gamma=\sigma(\pi_1M)$
preserves $\Omega$,
so $N=\Omega/\Gamma$ is a properly convex manifold. Since $M$ is closed and $\pi_1M\cong \pi_1N$, then by (\ref{htpyequiv})
 $N$ is also closed. Since $M$ is strictly convex, (\ref{stricthomeo}) implies
 $N$ is
 strictly convex.

Since $\Rep_P(N)$ is open, for $D_k\rho_kD_k^{-1}$ close enough to $\sigma$, there is a properly convex open set $\Omega''_k\subset\RP^n$
that is preserved by $\Gamma_k$ and $N_k=\Omega''_k/\Gamma_k$
 is homeomorphic to $N$. By (\ref{stricthomeo}) $N_k$ is strictly convex. By (\ref{unique}) $\Omega''_k=\Omega_k$ so $N\cong N_k=M_k\cong M$. Thus $\sigma\in\Rep_S(M)$. 
 
 If $D_k$ does not remain bounded, since the entries are non-increasing going down the diagonal,  and $\rho_k(g)$ remains bounded,
 $\sigma(g)$ is block upper triangular  and therefore $\sigma$ is reducible. Since $N$ is strictly convex,  (\ref{irreducible}) implies $\sigma$ is irreducible,
therefore $D_k$ stays bounded. Hence we may subsequence the $D_k$ so they converge to $D$. Then $\rho=D^{-1}\sigma D$, so $\rho\in\Rep_S(M)$. 
\end{proof}

\small
\bibliographystyle{amsalpha}
\bibliography{Koszulrefs.bib}

\end{document}